\documentclass[12pt,twoside]{amsart}
\usepackage{amssymb,amsmath,amsthm}
\usepackage{verbatim}
\usepackage{graphicx}
\usepackage{epsfig, enumerate}
\usepackage{color}
\usepackage{soul}
\usepackage{ dsfont }
\usepackage[all]{xy}
\voffset=-1cm \oddsidemargin 0cm \evensidemargin 0cm
\textheight 22cm \textwidth 16.5cm

%%%%%%%%%%%%%%%%%%%%%%%%%%%%%%%%%%%%%%%%%%%%%%%%%%%%%%%%%%%%

\DeclareMathAlphabet{\mathpzc}{OT1}{pzc}{m}{it}
\newcommand{\N}{{\ensuremath{\mathbb{N}}}}

%%%%%%%%%%%%%%%%%%%%%%%%%%%%%%%%%%%%%%%%%%%%%%%%

%%%%%%%%%%%%%%%%%%%%%%%%%%%%%%%%%%%%%%%%%%%%%%%%%
\def\A{\mathcal A}
\def\B{\mathcal B}
\def\F{\mathcal F}

\def\K{\mathcal K}
\def\L{\mathcal L}
\def\N{\mathcal N}

\def\p{\mathcal P}
\def\W{\mathcal W}
\def\H{\mathcal H}
\def\overF{\overline{\mathcal F}}

\def\d{\displaystyle}

\def\ep{\varepsilon}
\def\m{\mathpzc{m}}

\def\Na{\mathds{N}}

\def\vP{{\overset{\vee}P}}

\def\co{{\rm co}}

%\def\gen{\mbox{gen}}
%\def\ker{{\rm ker\, }}
%\def\dist{{\rm dist}}
%\def\trin{\vert\vert\vert}
%\def\tr{{\rm tr}}

%\usepackage[pagebackref,bookmarks,hyperindex]{hyperref}
%%%%%%%%%%%%%%%%%%%%%%%%%%%%%%%%%%%%%%%%%%%%%%%%%%%%%%%%%%%%
\newtheorem{theorem}{Theorem}[section]
\newtheorem{lemma}[theorem]{Lemma}

\newtheorem{corollary}[theorem]{Corollary}
\newtheorem{proposition}[theorem]{Proposition}

\newtheorem{example}[theorem]{Example}

%%%%%%%%%%%%%%%%%%%%%%%%%%%%%%%%%%%%%%%%%%%%%%%%%%%%%%%%%%%%

%%%%%%%%%%%%%%%%%%%%%%%%%%%%%%%%%%%%%%%%%%%%%%%%%%%%%%%%%%%%
%%%%%%%%%%%%%%%%%%%%%%%%%%%%%%%%%%%%%%%%%%%%%%%%%%%%%%%%%%%%

\begin{document}
\title[]{$\A$-compact mappings}

\author{Pablo Turco}

\thanks{ This project was supported in part by CONICET PIP 0624 and PICT 2011-1465} %CONICET PIP 0483}

 \address{IMAS - CONICET - Pab I,
 Facultad de Cs. Exactas y Naturales, Universidad de Buenos
Aires, (1428) Buenos Aires, Argentina.}
\email{paturco@dm.uba.ar}

\begin{abstract} 
For a fixed Banach operator ideal $\A$, we use the notion of $\A$-compact sets of Carl and Stephani to study $\A$-compact polynomials and $\A$-compact holomorphic mappings. Namely, those mappings $g\colon X\rightarrow Y$ such that every $x \in X$ has a neighborhood $V_x$ such that $g(V_x)$ is relatively $\A$-compact. We show that the behavior of $\A$-compact polynomials is determined by its behavior in any neighborhood of any point. We transfer some known properties of $\A$-compact operators to $\A$-compact polynomials. In order to study $\A$-compact holomorphic functions, we appeal to the $\A$-compact radius of convergence which allows us to characterize the functions in this class. Under certain hypothesis on the ideal $\A$, we give examples showing that our characterization is sharp.
\end{abstract}

\keywords{$\A$-compact sets, $\A$-compact polynomials, Holomorphic mappings}
\subjclass[2010] {Primary 46G20; Secondary 46B20, 46G25.}

\maketitle

\section*{Introduction}

In the theory of Banach operator ideals, some classes are characterized by the nature of their image on some neighborhoods of the origin of a Banach space. For example the classes of continuous, compact and weakly compact linear operators. These Banach operator ideals are called surjective. Based on this, several authors had introduced and studied different classes of functions between Banach spaces (in particular polynomials and holomorphic mappings) somehow extending this property, see for instance \cite{AMR, AR3, AR2, AR1, AS, GonGu00, LaTur1, Ryan_Wc}.

One of the first articles which consider polynomials and holomorphic functions of this type is due Aron and Schottenloher \cite{AS}. Here, the authors introduce compact polynomials and holomorphic mappings as follows. For Banach spaces $X$ and $Y$ and $x \in X$, a holomorphic function (resp. polynomial) $f\colon X\rightarrow Y$ is compact at $x$ if there exist $\ep>0$ such that $f(x+\ep B_X)$ is a relatively compact set in $Y$. Also, $f$ is said to be compact if it is compact at $x$ for all $x \in X$. In \cite{AS} several characterizations of compact holomorphic mappings and polynomials analogous to those of compact linear mappings are given. For instance, a holomorphic mapping is compact if and only if it is compact at the origin. Motivated by this work, Ryan \cite{Ryan_Wc} carried out a similar study of weakly compact holomorphic mappings, obtaining similar result to those in \cite{AS}. In 2000, Gonz\'alez and Guti\'errez \cite{GonGu00}, using the theory of {\it generating system of sets} of Stephani \cite{Ste}, extend some of the results of \cite{AS} and \cite{Ryan_Wc} to a wide class of holomorphic mappings.

In 1984, Carl and Stephani \cite{CaSt} introduced the notion of relatively $\A$-compact sets as follows. Given a (Banach) operator ideal $\A$, a set $K$ of a Banach space $X$ is relatively $\A$-compact if there exist a Banach space $Z$, a compact set $L\subset Z$ and a linear operator $T\in \A(Z;X)$ such that $K\subset T(L)$.  Sinha and Karn \cite{SiKa} defined, for $1\leq p <\infty$, relatively $p$-compact sets which, by \cite{LaTur2}, coincides with relatively $\mathcal N^p$-compact sets. Here $\mathcal N^p$ stands for the ideal of right $p$-nuclear operators, see for instance \cite[p.140]{RYAN} for the definition of this ideal.

Aron, Maestre and Rueda begin with the study of $p$-compact polynomials and $p$-compact holomorphic mappings, whose definition is obtained by generalizing in a natural way that of compact polynomials and holomorphic mappings. In light of \cite{AMR, AR1, LaTur1}, the behavior of $p$-compact polynomials is, in some sense, analogous to the behavior of $p$-compact operators. However, in \cite{LaTur1}, Lassalle and the author show that this is not the case for $p$-compact holomorphic mappings. For instance, \cite[Example~3.8]{LaTur1} exhibits a holomorphic mapping which is $p$-compact at the origin which fails to be $p$-compact. 

The aim of this work is to extend the results obtained in \cite{LaTur1} for $p$-compact mappings to the $\A$-compact setting. 

The article is organized as follows. In the first section we introduce some notation and state some basic results on $\A$-compact sets. In Section 2 we study $\A$-compact homogeneous polynomials. We show that this class fits in the theory of locally $\K_\A$-bounded homogeneous polynomials studied in \cite{AR2}. Also, we show that $\A$-compact homogeneous polynomials are a composition ideal of polynomials (see definitions below). This allows us to transfer some properties of $\A$-compact linear operators to $\A$-compact homogeneous polynomials. In particular, we show that $\A$-compact $n$-homogeneous polynomials forms a coherent sequence in the sense of Carando, Dimant and Muro (see definitions below).

Section 3 is dedicated to the study of $\A$-compact holomorphic functions. We define an $\A$-compact radius of convergence and we show that a function is $\A$-compact at some point if and only if all the polynomials of its Taylor series expansion at that point are $\A$-compact and the $\A$-compact radius of convergence is positive. As a counterpart, Example~\ref{exam_fun_no_A_compacta} shows a holomorphic mapping whose polynomials of its Taylor series expansion at any point are $\A$-compact, but the function fails to be $\A$-compact at every point. Also, we show that if a holomorphic function $f\colon X\rightarrow Y$ is $\A$-compact at some $x_0$, then $f$ is $\A$-compact for all $x \in x_0+rB_X$, where $r$ is the $\A$-compact radius of convergence of $f$ at $x_0$. This result is sharp, since Example~\ref{exam_A-compact_at_0} exhibits a holomorphic mapping $f\colon \ell_1\rightarrow X$ which is $\A$-compact at the origin, its $\A$-compact radius of convergence is $1$, but it fails to be $\A$-compact at $e_1$, the first canonical vector of $\ell_1$.

For general definitions concerning Banach operator ideals, we refer to the reader to the book of Pietsch~\cite{Pie} and of Defant and Floret~\cite{DF}.  Also, the books of Diestel, Jarchow and Tonge~\cite{djt} and of Ryan~\cite{RYAN}. For a general background on polynomials and holomorphic functions, we refer to the book of Dineen~\cite{DIN} and that of Mujica~\cite{Mu}.

\section{Preliminaries}

Throughout this paper, $X$, $Y$ and $Z$ are complex Banach spaces. We denote by $B_X$ the open unit ball of $X$. By $X', X'',\ldots$ we denote de topological dual, bidual, $\ldots$ of $X$. For a subset $M\subset X$, $\co\{M\}$ denotes the absolutely convex hull of $M$. 

We denote by $\L^n(X;Y)$ the space of all continuous $n$-linear mappings from $X$ to $Y$. As usual, $\L^1(X;Y)=\L(X;Y)$ is the space of all continuous linear operators and we identify $\L^0(X;Y)=Y$. A function $P\colon X\rightarrow Y$ is said to be a continuous $n$-homogeneous polynomial if there exists $U\in \L^n(X;Y)$ such that $P(x)=U(x,\ldots,x)$ for all $x\in X$.  With $\p^n(X;Y)$ we denote the vector space of all continuous $n$-homogeneous polynomials. Also, when $Y=\mathbb C$ we write $\p^n(X)$ instead of $\p^n(X;\mathbb C)$. The space $\p^n(X;Y)$ is a Banach space if it is endowed with the norm
$$
\|P\|=\sup_{x \in B_X} \|P(x)\|.
$$
Given $P \in \p^n(X;Y)$, $\vP$ stands for the unique symmetric continuous $n$-linear map, $\vP \in \L^n(X;Y)$ such that $\vP(x,\ldots,x)=P(x)$ for all $x \in X$.  If $x_0 \in X$, $P\in \p^n(X;Y)$ and $j< n$, we denote by 
$$
P_{x_0^j}(x)=\vP({x_0}^j,x^{n-j})=\vP(x_0,\overset{j \ {\rm times}} \ldots,x_0, x,\overset{n-j \ {\rm times}} \ldots,x).
$$
Note that $P_{x_0^j} \in \p^{n-j}(X;Y)$ and that $(P_{x_0^{j_1}})_{x_0^{j_2}}=P_{x_0^{j_1+j_2}}$ if $j_1+j_2 < n$.

An ideal of homogeneous polynomials $\mathcal Q$ is a subclass of all continuous homogeneous polynomials between Banach spaces such that, for all $n \in \Na$, the components $\mathcal Q^n(X;Y)=\p^n(X;Y) \cap Q$ satisfy:
\begin{enumerate}[\upshape (a)]
\item $\mathcal Q^n(X;Y)$ is a linear subspace of $\p^n(X;Y)$ which contains the $n$-homogeneous polynomials of finite type.
\item If $T\in \L(Z;X)$, $P \in \p^n(X;Y)$ and $R \in \L(Y;W)$, then the composition $T\circ P\circ R \in \mathcal Q^n(X;Y)$.
\end{enumerate}

$\mathcal Q$ is a Banach polynomial ideal if over $\mathcal Q$ is defined a norm $\|\cdot\|_{\mathcal Q}$ such that
\begin{enumerate}[\upshape (a)]
\item $\mathcal Q^n(X;Y)$ endowed with the norm $\|\cdot\|_{\mathcal Q}$ is a Banach space for all Banach spaces $X$ and $Y$.
\item $\|P^n\colon \mathbb C\rightarrow \mathbb C \colon P^n(x)=x^n\|_{\mathcal Q}=1$ for all $n \in \Na$.
\item If $T\in \L(Z;X)$, $P \in \p^n(X;Y)$ and $R \in \L(Y;W)$, then $\|T\circ P\circ R\|_{\mathcal Q}\leq \|T\| \|P\|_{\mathcal Q} \|R\|^n$.
\end{enumerate}
The case $n=1$ covers the classical theory of Banach operators ideals.

We denote by $\overF, \K$ and $\W$ the (Banach) ideals of approximable, compact and weakly compact operators. In general, we use $\A$ for a general Banach operator ideal, which is endowed with a norm $\|\cdot\|_\A$.

For a Banach operator ideal $\A$, an operator $T\in \L(X;Y)$ belongs to the surjective hull of $\A$, $\A^{sur}$, if there exist a Banach space $Z$ and a linear operator $R\in \A(Z;Y)$ such that $T(B_X)\subset R(B_Z)$. The norm on $\A^{sur}$ is defined as
$$
\|T\|_{\A^{sur}}=\inf\{\|R\|_\A \colon T(B_X)\subset R(B_Z)\},
$$
and makes it a Banach operator ideal. Also, we say that $\A$ is surjective if $\A=\A^{sur}$ isometrically.

For a Banach operator ideal $\A$, a set $M\subset X$ is $\A$-bounded if there exist a Banach space $Z$ and a linear operator $T\in \A(Z;Y)$ such that $M\subset T(B_Z)$, see \cite{Ste}. In the case of $\A=\W$, the $\W$-bounded sets coincide with the weakly compact sets and if $\A=\K$, we obtain the compact sets. A particular case of $\A$-bounded sets are the (relatively) $\A$-compact sets of Carl and Stephani, which were introduced in \cite{CaSt}. For a fix a Banach operator ideal $\A$, a subset $K$ of $X$ is said to be relatively $\A$-compact if there exist a Banach space $Z$, an operator $T\in \A(Z;X)$ and a compact set $M\subset Z$ such that $K\subset T(M)$. A sequence $(x_n)_n \subset X$ is $\A$-null if there exists a Banach space $Z$, an operator $T \in \A(Z;X)$ and a null sequence $(z_n)_n \subset Z$ such that $x_n=Tz_n$ for all $n \in \Na$. Also, from~\cite[Proposition~1.4]{LaTur2} we have that a sequence $(x_n)_n \subset X$ is $\A$-null if and only if, $(x_n)_n$ is relatively $\A$-compact and norm convergent to zero. The {\it size} of a relatively $\A$-compact set is defined in~\cite{LaTur2} as follows. For a relatively $\A$-compact set $K\subset X$, 
$$
\m_\A(K,X)=\inf\{\|T\|_\A \colon K\subset T(M), \ T\in \A(Z;X) \ {\rm and} \ M\subset B_X\},
$$
where the infimum is taken considering all Banach spaces $Z$, all operators $T \in \A(Z;X)$ and all compact sets $M\subset B_Z$ for which the inclusion $K\subset T(M)$ holds. If a set $K\subset X$ is not $\A$-compact, we set $\m_\A(K,X)=\infty$. Note that, if $K_1, K_2\subset X$ are relatively $\A$-compact sets and $\lambda \in \mathbb C$, we have the inequality
$$
\m_\A(\lambda K_1+K_2,X)\leq |\lambda|\m_\A(K_1,X)+\m_\A(K_2,X).
$$
Also, if $K\subset X$ is a relatively $\A$-compact set, then $\co\{K\}$ is relatively $\A$-compact and
$$
\m_\A(K,X)=\m_\A(\co\{K\},X).
$$ 
Observe that the relatively $\K$-compact and the relatively $\overF$-compact sets coincides with the relatively compact sets. Also, if $\A=\mathcal N^p$ ($1\leq p <\infty$), the relatively $\mathcal N^p$-compact set are precisely the relatively $p$-compact sets of Sinha and Karn \cite[Remark~1.3]{LaTur2}.
A linear operator $T\in \L(X;Y)$ is said to be $\A$-compact if $T(B_X)$ is a relatively $\A$-compact set in $Y$. The space of all $\A$-compact operators from $X$ to $Y$ is denoted by $\K_\A$. This space becomes a Banach operator ideal we endow it with the norm (see \cite{LaTur2})
$$
\|T\|_{\K_\A}=\m_\A(T(B_X),Y).
$$
In particular, for $1\leq p <\infty$, $\K_{\N^p}=\K_p$, the Banach ideal of $p$-compact operators of Sinha and Karn \cite{SiKa}.

Finally, recall that a function $f\colon X\rightarrow Y$ is holomorphic if for every $x_0 \in X$, there exist $r>0$ and a (unique) sequence of polynomials $P_nf(x_0) \in \p^n(X;Y)$ such that
$$
f(x)=\sum_{n=0}^{\infty} P_nf(x_0)(x-x_0)
$$
uniformly for $x \in x_0+rB_X$. This sum  is called the Taylor series expansion of $f$ at $x_0$. The space of all holomorphic function from $X$ to $Y$ is denote by $\H(X;Y)$ and, when $Y=\mathbb C$, $\H(X)$.

\section{$\A$-compact polynomials}

Aron and Rueda \cite{AR2} introduced the class of locally $\A$-bounded polynomials by considering the theory of generating system of sets of Stephani. For a Banach operator ideal $\A$, an $n$-homogeneous polynomial $P \in \p^n(X;Y)$ is locally $\A$-bounded if $P(B_X)$ is $\A$-bounded. The vector space of all locally $\A$-bounded $n$-homogeneous polynomials from $X$ to $Y$ is denote by $\p_\A^n(X;Y)$ and it becomes a Banach space endowed with the norm
$$
\|P\|_\A =\inf \|T\|_\A,
$$
where the infimun is taken over all the Banach spaces $Z$ and operators $T$ such that the inclusion
$$
P(B_X)\subset T(B_Z)
$$
holds \cite[Theorem~3.4]{AR2}. It is clear that, for a fixed Banach operator ideal $\A$, $\K_\A$-bounded sets are relatively $\A$-compact sets. Now, combining \cite[Theorem~2.1]{CaSt} with \cite[Corollary~1.9]{LaTur2}, relatively $\A$-compact sets are $\K_\A$-bounded sets. Hence $P \in \p_{\K_\A}^n(X;Y)$ if and only if $P$ maps bounded sets into relatively $\A$-compact sets. Every $\K_\A$-bounded homogeneous polynomial will be called $\A$-compact homogeneous polynomial. Also, from \cite[Corollary~1.9]{LaTur2} we may conclude that if $P \in \p_{\K_\A}^n(X;Y)$, then

$$
\|P\|_{\K_\A}= \m_\A(P(B_X),Y).
$$

Since $\K_\A$ satisfies Condition $\Gamma$ in~\cite[p. 965]{AR2} and $\K_\A$ is surjective~\cite[p. 82]{CaSt}, by \cite[Corollary~4.6]{AR2}  $\p^n_{\K_\A}$ is a {\it composition ideal} of polynomial. This last concept was introduced in \cite{BPR}. In the case of $\A$-compact $n$-homogeneous polynomials, we improve \cite[Corollary~4.6]{AR2} using the linearization of polynomials. We denote by $\widehat{\bigotimes}^n_{\pi_s}X$ the completion of the symmetric $n$-tensor product endowed with the symmetric projective norm $\pi_s$, $\Lambda \in \p^n(X;\widehat{\bigotimes}^n_{\pi_s}X)$ is defined by $\Lambda(x)=\otimes^n x$  and, for a $n$-homogeneous polynomial $P$, $L_P \in \mathcal L( \widehat{\bigotimes}^n_{\pi_s}X;Y)$ denotes the unique linear operator such that $P=L_P \circ \Lambda$. The following extends \cite[Lemma~2.1]{LaTur1}.

\begin{proposition}\label{Prop_A-comp-poly-lineal}
Let $X$ and $Y$ be Banach spaces and $\A$ be a Banach opeartor ideal. Then, $P\in \p_{\K_\A}^n(X;Y)$ if and only if $L_P \in \K_\A(\widehat{\bigotimes}^n_{\pi_s}X;Y)$. Moreover, $\|P\|_{\K_\A}=\|L_P\|_{\K_\A}$.
\end{proposition}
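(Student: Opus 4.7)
My plan is to show the two directions via set inclusions together with the identity $\|P\|_{\K_\A}=\m_\A(P(B_X),Y)$ and $\|L_P\|_{\K_\A}=\m_\A(L_P(B_{\widehat{\bigotimes}^n_{\pi_s}X}),Y)$, using as the key ingredient the universal property of the symmetric projective tensor product: the unit ball $B_{\widehat{\bigotimes}^n_{\pi_s}X}$ equals the closed absolutely convex hull of $\Lambda(B_X)=\{\otimes^n x\colon \|x\|\leq 1\}$.

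For the easy direction, suppose $L_P\in\K_\A(\widehat{\bigotimes}^n_{\pi_s}X;Y)$. Since $\|\Lambda\|=1$, we have $\Lambda(B_X)\subset B_{\widehat{\bigotimes}^n_{\pi_s}X}$, so
\[
P(B_X)=L_P(\Lambda(B_X))\subset L_P(B_{\widehat{\bigotimes}^n_{\pi_s}X}).
\]
Since a subset of a relatively $\A$-compact set is relatively $\A$-compact (with no larger $\m_\A$-size, as is immediate from the definition), this gives $P\in\p^n_{\K_\A}(X;Y)$ with $\|P\|_{\K_\A}\leq\|L_P\|_{\K_\A}$.

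For the other direction, assume $P\in\p^n_{\K_\A}(X;Y)$ and fix $\varepsilon>0$. Pick a Banach space $Z$, an operator $T\in\A(Z;Y)$ and a compact set $M\subset B_Z$ with $P(B_X)\subset T(M)$ and $\|T\|_\A\leq\|P\|_{\K_\A}+\varepsilon$. The step I expect to be the delicate one is transferring this factorization from $P(B_X)$ to $L_P(B_{\widehat{\bigotimes}^n_{\pi_s}X})$; here the universal property above is what makes things work. Using continuity and linearity of $L_P$,
\[
L_P(B_{\widehat{\bigotimes}^n_{\pi_s}X})=L_P(\overline{\co}\{\Lambda(B_X)\})\subset \overline{\co}\{L_P(\Lambda(B_X))\}=\overline{\co}\{P(B_X)\}.
\]
Now $\co\{P(B_X)\}\subset T(\co\{M\})$ and, since $\overline{\co}\{M\}$ is compact (closed absolutely convex hull of a compact set in a Banach space), $T(\overline{\co}\{M\})$ is compact, hence closed, so
\[
\overline{\co}\{P(B_X)\}\subset \overline{T(\co\{M\})}\subset T(\overline{\co}\{M\}).
\]
Since $\overline{\co}\{M\}\subset B_Z$ is compact, this exhibits $L_P(B_{\widehat{\bigotimes}^n_{\pi_s}X})$ as $\A$-compact with $\|L_P\|_{\K_\A}\leq\|T\|_\A\leq\|P\|_{\K_\A}+\varepsilon$. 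Letting $\varepsilon\to 0$ and combining with the first direction yields $\|L_P\|_{\K_\A}=\|P\|_{\K_\A}$, finishing the proof.
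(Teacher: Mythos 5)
Your proof is correct and follows essentially the same route as the paper's: both rest on Floret's identification of $B_{\widehat{\bigotimes}^n_{\pi_s}X}$ with the (closed) absolutely convex hull of $\{\otimes^n x \colon \|x\|\leq 1\}$ and the resulting two inclusions $P(B_X)\subset L_P(B_{\widehat{\bigotimes}^n_{\pi_s}X})\subset \overline{\co}\{P(B_X)\}$. The only difference is that where the paper simply invokes the fact stated in its preliminaries that relative $\A$-compactness and $\m_\A$ are unchanged under (closed) absolutely convex hulls, you re-derive this inline from the factorization $P(B_X)\subset T(M)$ together with the compactness of $\overline{\co}\{M\}$.
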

\begin{proof}
Note that  $B_{ \widehat{\bigotimes}^n_{\pi_s}X}=\co\{\otimes^n x \colon \|x\|\leq 1\}$ \cite[p.10]{Flo97}. Hence, th result follows by considering the inclusions
$$
P(B_X)\subset L_P(B_{\widehat{\bigotimes}^n_{\pi_s}X})\subset \co\{P(B_X)\}.
$$  
\end{proof}

As an immediate consequence, we have the following result.

\begin{corollary} 
Let $X$ and $Y$ be Banach spaces and $\A$ be a Banach operator ideal. Then, $P \in \p^n_{\K_\A}(X;Y)$ if and only there exist a Banach space $Z$, an $\A$-compact operator $T\in \K_\A(Z;Y)$ and an $n$-homogeneous polynomial $Q \in \p^n(X;Z)$ such that $P=T\circ Q$. Moreover, $\|P\|_{\K_\A}=\inf\{\|T\|_{\K_\A} \|Q\|\}$, where the infimum is taken over all the possible factorization of $P$ as above.
\end{corollary}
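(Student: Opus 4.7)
The plan is to deduce the corollary directly from Proposition~\ref{Prop_A-comp-poly-lineal} for one direction and to verify the other direction by a direct set-inclusion argument using the size function $\m_\A$.

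For the ``only if'' direction, suppose $P\in \p^n_{\K_\A}(X;Y)$. The plan is to use the canonical linearization: set $Z=\widehat{\bigotimes}^n_{\pi_s}X$, $Q=\Lambda \in \p^n(X;Z)$ (so $Q(x)=\otimes^n x$, with $\|Q\|=1$), and $T=L_P$. By Proposition~\ref{Prop_A-comp-poly-lineal}, $L_P\in \K_\A(Z;Y)$ with $\|L_P\|_{\K_\A}=\|P\|_{\K_\A}$, and by definition of $L_P$ one has $P=L_P\circ \Lambda = T\circ Q$. This exhibits the required factorization and shows that $\inf\{\|T\|_{\K_\A}\|Q\|\}\leq \|P\|_{\K_\A}$.

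For the ``if'' direction, assume $P=T\circ Q$ with $T\in \K_\A(W;Y)$ and $Q\in \p^n(X;W)$ for some Banach space $W$. Since $Q$ is $n$-homogeneous, $Q(B_X)\subset \|Q\|\, B_W$, hence
$$
P(B_X)\subset T\bigl(\|Q\|\, B_W\bigr)=\|Q\|\, T(B_W).
$$
Because $T(B_W)$ is relatively $\A$-compact, so is any scalar multiple of it, and therefore so is $P(B_X)$; this already yields $P\in \p^n_{\K_\A}(X;Y)$. Applying the scaling inequality $\m_\A(\lambda K,Y)\leq |\lambda|\,\m_\A(K,Y)$ stated in the preliminaries, together with the identity $\|P\|_{\K_\A}=\m_\A(P(B_X),Y)$ and $\|T\|_{\K_\A}=\m_\A(T(B_W),Y)$, we obtain
$$
\|P\|_{\K_\A}\leq \|Q\|\,\|T\|_{\K_\A}.
$$
Taking the infimum over all such factorizations yields $\|P\|_{\K_\A}\leq \inf\{\|T\|_{\K_\A}\|Q\|\}$, and combined with the reverse inequality from the first paragraph this gives the claimed norm equality.

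I do not expect any serious obstacle here: the content of the corollary is essentially the observation that the universal factorization through $\widehat{\bigotimes}^n_{\pi_s}X$ provided by Proposition~\ref{Prop_A-comp-poly-lineal} is norm-optimal, and that any other factorization through an $\A$-compact linear operator can only give a larger (or equal) norm by the elementary scaling property of $\m_\A$. The only minor point to be careful about is distinguishing the polynomial norm $\|Q\|$ (not an ideal norm) from the ideal norms, so that the inclusion $Q(B_X)\subset \|Q\| B_W$ is applied correctly.
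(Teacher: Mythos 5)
Your proposal is correct and follows exactly the route the paper intends: the ``only if'' direction is the canonical factorization $P=L_P\circ\Lambda$ supplied by Proposition~\ref{Prop_A-comp-poly-lineal} (with $\|\Lambda\|=1$), and the ``if'' direction is the elementary inclusion $P(B_X)\subset \|Q\|\,T(B_W)$ together with monotonicity and homogeneity of $\m_\A$, which the paper regards as immediate. No discrepancies to report.
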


In the particular case of $p$-compact polynomials, the above corollary was obtained in \cite[Proposition~2.1]{LaTur1}.
 
Proposition~\ref{Prop_A-comp-poly-lineal} allows us to transfer some properties of the $\A$-compact operators to $\A$-compact polynomials. Recall that for a Banach operator ideal $\A$, the dual ideal of $\A$, $\A^d$ is given by
$$
\A^d(X;Y)=\{T \in \L(X;Y) \colon T' \in \A(Y';X')\}.
$$
This ideal becomes a Banach operator ideal if it is endowed with the norm
$$
\|T\|_{\A^d}=\|T'\|_\A.
$$
From \cite[Corollary~2.4]{LaTur2}, we have the isometric equality $\K_\A=\K_\A^{dd}$. Given a polynomial $P \in \p^n(X;Y)$, its transpose $P'\in \L(Y;\p^n(X))$ is given by $P'(y')(x)=y'\circ P (x)$, see \cite{AS}. 

\begin{proposition} Let $X$ and $Y$ be Banach spaces, $\A$ a Banach operator ideal and $P \in \p^n(X;Y)$.  Then $P\in \p^n_{\K_\A}(X;Y)$ if and only if $P' \in \K_\A^d(Y';\p^n(X))$. Moreover, we have $\|P\|_{\K_\A}=\|P'\|_{\K_\A^d}$.
\end{proposition}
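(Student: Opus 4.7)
The plan is to combine the linearization from Proposition~\ref{Prop_A-comp-poly-lineal} with the isometric biduality $\K_\A = \K_\A^{dd}$ from \cite[Corollary~2.4]{LaTur2}. The key observation is that under the canonical isometric identification $\p^n(X) \cong (\widehat{\bigotimes}^n_{\pi_s}X)'$, which is built into the very definition of the symmetric projective tensor product, the polynomial transpose $P'$ corresponds precisely to the linear transpose $(L_P)'$ of the linearization. Once this identification is established, the rest is a short chain of equivalences.

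To verify the identification, I would compute, for $y' \in Y'$ and $x \in X$,
\[
(L_P)'(y')(\Lambda(x)) = y'\bigl(L_P(\otimes^n x)\bigr) = y'(P(x)) = P'(y')(x),
\]
so $(L_P)'(y')$ and $P'(y')$ agree on every tensor of the form $\otimes^n x$. Since $B_{\widehat{\bigotimes}^n_{\pi_s}X} = \overline{\co}\{\otimes^n x : \|x\|\leq 1\}$ (as already used in the proof of Proposition~\ref{Prop_A-comp-poly-lineal}), these elements are norming and the functional $(L_P)'(y')$ is determined, so $P' = (L_P)'$ under the canonical identification, and this identification is itself isometric.

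With $P' = (L_P)'$ in hand, the proof proceeds by chaining: $P \in \p^n_{\K_\A}(X;Y)$ iff $L_P \in \K_\A(\widehat{\bigotimes}^n_{\pi_s}X;Y)$ (by Proposition~\ref{Prop_A-comp-poly-lineal}, isometrically), iff $(L_P)'' \in \K_\A$ (by $\K_\A = \K_\A^{dd}$, isometrically), iff $(L_P)' \in \K_\A^d$ (by definition of the dual ideal, isometrically), iff $P' \in \K_\A^d(Y';\p^n(X))$ (by the identification). Each link is norm-preserving, so the equality $\|P\|_{\K_\A} = \|P'\|_{\K_\A^d}$ drops out of the composition.

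The only point that requires care is the identification of $P'$ with $(L_P)'$ as operators into $\p^n(X)$ and $(\widehat{\bigotimes}^n_{\pi_s}X)'$ respectively, and the verification that this identification is isometric as a map between Banach spaces of operators (so that membership in $\K_\A^d$ and the norm $\|\cdot\|_{\K_\A^d}$ are preserved). After that, the argument is a purely formal combination of already-available tools, and I anticipate no further obstacle.
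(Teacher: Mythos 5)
Your proposal is correct and follows essentially the same route as the paper: linearize via Proposition~\ref{Prop_A-comp-poly-lineal}, apply the isometric equality $\K_\A=\K_\A^{dd}$ from \cite[Corollary~2.4]{LaTur2}, and use the isometric isomorphism $\Lambda'\colon (\widehat{\bigotimes}^n_{\pi_s}X)'\rightarrow \p^n(X)$ to identify $P'$ with $\Lambda'\circ L'_P$. The paper merely writes the two implications separately with the ideal-property inequalities $\|P'\|_{\K_\A^d}\leq\|P\|_{\K_\A}$ and $\|P\|_{\K_\A}\leq\|P'\|_{\K_\A^d}$, whereas you package the same steps as a single chain of isometric equivalences.
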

\begin{proof}
If $P\in \p^n_{\K_\A}(X;Y)$, by Proposition~\ref{Prop_A-comp-poly-lineal}, $L_P \in \K_\A(\widehat{\bigotimes}^n_{\pi_s}X;Y)$ and $\|L_P\|_{\K_\A}=\|P\|_{\K_\A}$. 
By \cite[Corollary~2.4]{LaTur2}, it follows that $L_P \in \K_\A^d(Y';\widehat{\bigotimes}^n_{\pi_s}X)$ with $\|L'_P\|_{\K_\A^d}=\|L_P\|_{\K_\A}$. Since $P'=\Lambda'\circ L'_P$, then $P'\in \K_\A^d(Y';\p^n(X))$ and $$
\|P'\|_{\K_\A^d}\leq \|L_P\|_{\K_\A}=\|P\|_{\K_\A}.
$$

For the converse, note that $\Lambda'\colon (\widehat{\bigotimes}^n_{\pi_s}X))'\rightarrow \p^n(X)$ is an isometric isomorphism. Now, by assumption, we have that $P'=\Lambda'\circ L'_P \in \K_\A^d(Y';\p^n(X))$.  Then $\Lambda'^{-1} \circ P' =L'_P \in \K_\A^d(Y';(\widehat{\bigotimes}^n_{\pi_s}X)')$, and therefore $L''_P \in \K_\A((\widehat{\bigotimes}^n_{\pi_s}X)'';Y'')$. Applying again \cite[Corollary~2.4]{LaTur2}, we get that $L_P \in \K_\A(\widehat{\bigotimes}^n_{\pi_s}X;Y)$ which, thanks to Proposition~\ref{Prop_A-comp-poly-lineal}, is equivalent to $P \in \p_{\K_\A}(X;Y)$. Furthermore, 
$$
\|P\|_{\K_\A}=\|L_P\|_{\K_\A}=\|L'_P\|_{\K_\A^d}=\|\Lambda'^{-1} \circ P'\|_{\K_\A^d}\leq \|P'\|_{\K_\A^d}.
$$
\end{proof}

Also, the isometric equality $\K_\A=\K_\A^{dd}$ allows us to show that the Aron-Berner extension of an $n$-homogeneous polynomial is a $\|\cdot\|_{\K_\A}$-isometric extension which preserves the ideal of $\A$-compact polynomials. This extend \cite[Proposition~2.4]{LaTur1}. Recall that for a polynomial $P \in \p^n(X;Y)$, the Aron-Berner extension of $P$, $\overline P$, is the extension of $P$ from $X$ to $X''$ obtained by weak-star density \cite{AB}.

\begin{proposition} Let $X$ and $Y$ be Banach spaces, $\A$ a Banach operator ideal and $P \in \p^n(X;Y)$. Then, $P \in \p^n_{\K_\A}(X;Y)$ if and only if $\overline P \in \p_{\K_\A}^n(X'';Y'')$. Moreover, we have $\|P\|_{\K_\A}=\|\overline P\|_{\K_\A}$.
\end{proposition}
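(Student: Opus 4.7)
The plan is to reduce everything to a statement about linear operators via Proposition~\ref{Prop_A-comp-poly-lineal}, and then exploit the isometric equality $\K_\A=\K_\A^{dd}$ from \cite[Corollary~2.4]{LaTur2}, which was already the engine of the preceding proposition. By Proposition~\ref{Prop_A-comp-poly-lineal}, $P\in\p^n_{\K_\A}(X;Y)$ iff $L_P\in\K_\A(\widehat{\bigotimes}^n_{\pi_s}X;Y)$, and $\overline P\in\p^n_{\K_\A}(X'';Y'')$ iff $L_{\overline P}\in\K_\A(\widehat{\bigotimes}^n_{\pi_s}X'';Y'')$, with equal $\K_\A$-norms in both cases. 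The core of the proof is the factorization $L_{\overline P}=L_P''\circ J_n$, where $J_n\colon\widehat{\bigotimes}^n_{\pi_s}X''\to(\widehat{\bigotimes}^n_{\pi_s}X)''$ is the canonical norm-one linear map defined on elementary tensors by weak-star density (sending $\otimes^n x''$ to the $w^*$-limit of $\otimes^n x_\alpha$ for any bounded net $x_\alpha\to x''$ in $X$). Its existence, the norm bound $\|J_n\|\le 1$, and the identity $L_{\overline P}=L_P''\circ J_n$ are part of the standard Aron--Berner machinery on symmetric tensor products.

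For the forward direction, I would start from $L_P\in\K_\A$ with $\|L_P\|_{\K_\A}=\|P\|_{\K_\A}$. The isometric equality $\K_\A=\K_\A^{dd}$ gives $L_P''\in\K_\A$ with the same norm. The ideal property applied to the factorization $L_{\overline P}=L_P''\circ J_n$ then yields $L_{\overline P}\in\K_\A$ with
\[
\|L_{\overline P}\|_{\K_\A}\le\|L_P''\|_{\K_\A}\|J_n\|\le\|P\|_{\K_\A},
\]
and Proposition~\ref{Prop_A-comp-poly-lineal} delivers $\overline P\in\p^n_{\K_\A}(X'';Y'')$ with $\|\overline P\|_{\K_\A}\le\|P\|_{\K_\A}$.

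For the reverse direction, the identity $\overline P\circ j_X=j_Y\circ P$, where $j_X\colon X\to X''$ and $j_Y\colon Y\to Y''$ are the canonical inclusions, is the key. Composing the $\K_\A$-compact polynomial $\overline P$ with the norm-one linear map $j_X$ yields $j_Y\circ P\in\p^n_{\K_\A}(X;Y'')$ with $\|j_Y\circ P\|_{\K_\A}\le\|\overline P\|_{\K_\A}$. It remains to transfer this to $P\in\p^n_{\K_\A}(X;Y)$ without losing norm: since $j_Y$ is an isometric embedding and $(j_Y\circ P)(B_X)=j_Y(P(B_X))$ lies in $j_Y(Y)$, a relatively $\A$-compact covering $j_Y(P(B_X))\subset T(L)$ with $T\in\A(Z;Y'')$ can be pulled back to produce a relatively $\A$-compact covering of $P(B_X)$ in $Y$ of the same $\m_\A$-size, yielding $\|P\|_{\K_\A}\le\|\overline P\|_{\K_\A}$.

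The main obstacle will be justifying the factorization $L_{\overline P}=L_P''\circ J_n$ and the estimate $\|J_n\|\le 1$, which require care with the weak-star-weak-star continuity of the Aron--Berner extension and the compatibility of the symmetric projective tensor norm with canonical inclusions into biduals. Once this factorization is set up, the two directions reduce to routine applications of the ideal property together with the isometric equality $\K_\A=\K_\A^{dd}$, completely parallel to the proof of the preceding proposition about $P'$.
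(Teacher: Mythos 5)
Your forward direction is, modulo notation, the paper's own argument: the paper writes $\overline P = L_P''\circ \overline\Lambda$, invokes Davie--Gamelin \cite{DG} to get $\|\overline\Lambda\|=1$, and uses the isometry $\K_\A=\K_\A^{dd}$ of \cite[Corollary~2.4]{LaTur2} to conclude that $L_P''$ is $\A$-compact; your map $J_n$ is nothing but the linearization $L_{\overline\Lambda}$ of $\overline\Lambda$, so the factorization $L_{\overline P}=L_P''\circ J_n$ you flag as the ``main obstacle'' is settled by the same citation and is not where the difficulty lies.

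The genuine gap is in your reverse direction, at precisely the step you describe as routine. From $j_Y(P(B_X))\subset \overline P(B_{X''})$ you obtain a covering $j_Y(P(B_X))\subset T(L)$ with $T\in\A(Z;Y'')$ and $L\subset Z$ compact; but $T$ takes values in $Y''$, not in $j_Y(Y)$, and there is no inverse or projection available to compose with. The fact that $j_Y$ is an isometric embedding therefore does not let you ``pull back'' $T$ to an operator in $\A(W;Y)$ covering $P(B_X)$, let alone one of the same $\A$-norm. What your argument actually requires is the statement that a set $K\subset Y$ is relatively $\A$-compact if and only if $j_Y(K)$ is relatively $\A$-compact in $Y''$, with $\m_\A(K,Y)=\m_\A(j_Y(K),Y'')$. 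This is \cite[Corollary~2.3]{LaTur2} --- the set version of the regularity of the ideal $\K_\A$ --- and it is a genuine theorem, not a formal pull-back; it is exactly what the paper cites at this point in its proof (which otherwise needs no polynomial machinery at all: the containment $j_Y(P(B_X))\subset \overline P(B_{X''})$ plus that corollary give $P\in\p^n_{\K_\A}(X;Y)$ and $\|P\|_{\K_\A}\le\|\overline P\|_{\K_\A}$ directly). If you replace your pull-back paragraph by an appeal to that corollary, your proof closes and coincides with the paper's.
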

\begin{proof}
Suppose that $\overline P$ is $\A$-compact. Since $J_Y(P(B_X))\subset \overline P(B_{X''})$, then $J_Y(P(B_X))$ is a relatively $\A$-compact set and $\m_\A(J_Y(P(B_X));Y'')\leq \|\overline{P}\|_{\K_\A}$. By 
\cite[Corollary~2.3]{LaTur2}, $P(B_X)$ is relatively $\A$-compact in $Y$ and $\m_\A(P(B_X),Y)=\m_\A(J_Y(P(B_X)),Y'')$, which implies that $P \in \p^n_{\K_\A}(X;Y)$ and $\|P\|_{\K_\A}\leq \|\overline P\|_{\K_\A}$.

Now, take $P \in \p^n_{\K_\A}(X;Y)$. By Proposition~\ref{Prop_A-comp-poly-lineal}, $L_P$ is $\A$-compact with $\|L_P\|_{\K_\A}=\|P\|_{\K_\A}$. Since $\overline P=L''_P \circ \overline \Lambda $ and the Aron-Berner extension preserves the norm~\cite[Theorem~3]{DG} we have $\|\overline \Lambda\|=1$ and
$$
\overline P(B_{X''})=L''_P\circ \overline \Lambda (B_{X''})\subset L''_P(B_{(\widehat{\bigotimes}^n_{\pi_s}X)}'').
$$
By \cite[Corollary~2.4]{LaTur2}, $L_P''$ is $\A$-compact, which implies that $\overline P$ is $\A$-compact and
$$
\|\overline P\|_{\K_\A}\leq \|L''_P\|_{\K_\A}=\|L_P\|_{\K_\A}=\|P\|_{\K_\A},
$$
which completes the proof.

\end{proof}

Now we show that $\A$-compact $n$-homogeneous polynomials are complete determined by their behavior at some point. To this end, we introduce the following notation. For Banach spaces $X$ and $Y$, $\A$ a Banach operator ideal and $x_0 \in X$, we say that an $n$-homogeneous polynomial $P\in \p^n(X;Y)$ is $\A$-compact at $x_0$ if there exists $\ep>0$ such that $P(x_0+\ep B_X)$ is relatively $\A$-compact in $Y$. 

First note that $P$ is $\A$-compact at $0$ if and only if $P$ is $\A$-compact. And we claim that this happens if and only if $P$ is $\A$-compact at every $x \in X$. Indeed, suppose that for $\ep>0$, $P(\ep B_X)$ is relatively $\A$-compact. Then $\ep^n P(B_X)$ is relatively $\A$-compact, and hence $P(B_X)$ is also relatively $\A$-compact. Now, take $x \in X$. Then, since $x+\ep B_X \subset (\|x\|+\ep) B_X$, we have
$$
P(x+\ep B_X) \subset P((\|x\|+\ep )B_X) = \textstyle{(\frac{\|x\|+\ep}{\ep})}^n P(\ep B_X),
$$
thus $P(x+\ep B_X)$ is relatively $\A$-compact.

Now, to show the converse, we need the following lemma.

\begin{lemma}\label{lemma_P_a_Acompact}
Let $X$ and $Y$ be Banach spaces, $\A$ a Banach operator ideal, $P\in \p^n(X;Y)$ and $x_0 \in X$. If $P$ is $\A$-compact at $x_0$ then, for any $j<n$,  $P_{(x_0)^j} \in \p_{\K_\A}^{n-j}(X;Y)$.
Moreover, if $P(x_0+\ep B_X)$ is relatively $\A$-compact for some $\ep>0$, then
$$
\|P_{(x_0)^j}\|_{\K_\A} \leq C_{\ep, j}(x_0) \m_\A(P(x_0+\ep B_X),Y)
$$
where $C_{\ep, j}(x_0)= \d\frac{(n-j)!}{n! \ \ep^{nj-\frac{j  (j+1)}{2}}} (\|x_0\|+\ep)^{n(j-1) - \frac{j (j-1)}{2}}$.
\end{lemma}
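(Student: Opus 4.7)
The plan is induction on $j\ge 1$, with the base case $j=1$ obtained from a Cauchy-type extraction of the linear part of the expansion at $x_0$, and the inductive step obtained by iterating the base case applied to $P_{x_0^{j-1}}$.

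For $j=1$, the starting point is the Taylor expansion at $x_0$,
$$P(x_0+\lambda x)=\sum_{k=0}^{n}\binom{n}{k}\lambda^{n-k}P_{x_0^{k}}(x),\qquad \lambda\in\mathbb C,\ x\in X.$$
The coefficient of $\lambda^{n-1}$ is $nP_{x_0}(x)$, which Cauchy's formula recovers as $\frac{1}{2\pi i}\oint_{|\lambda|=\ep}\lambda^{-n}P(x_0+\lambda x)\,d\lambda$ (a finite sum over $(n+1)$-th roots of unity would work equally well). For $x\in B_X$ and $|\lambda|=\ep$ one has $x_0+\lambda x\in x_0+\ep B_X$, so this integral realizes $nP_{x_0}(x)$ as an element of $\ep^{-(n-1)}$ times the closed absolutely convex hull of $P(x_0+\ep B_X)$. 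The size properties recalled in Section~1, namely $\m_\A(\co\{K\},Y)=\m_\A(K,Y)$ and $\m_\A(\lambda K,Y)\le|\lambda|\,\m_\A(K,Y)$, then yield $\|P_{x_0}\|_{\K_\A}\le\frac{1}{n\ep^{n-1}}\m_\A(P(x_0+\ep B_X),Y)$, which is exactly $C_{\ep,1}(x_0)\,\m_\A(P(x_0+\ep B_X),Y)$.

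For the inductive step I apply the base case to $Q:=P_{x_0^{j-1}}\in\p^{n-j+1}(X;Y)$, which by the induction hypothesis is $\A$-compact. Using the identity $(P_{x_0^{j-1}})_{x_0}=P_{x_0^{j}}$ recalled in Section~1, this gives
$$\|P_{x_0^{j}}\|_{\K_\A}\le\frac{1}{(n-j+1)\ep^{n-j}}\,\m_\A(Q(x_0+\ep B_X),Y).$$
The right-hand side is controlled by $\|Q\|_{\K_\A}$ via the homogeneity bound $Q(x_0+\ep B_X)\subset(\|x_0\|+\ep)^{n-j+1}Q(B_X)$, whence $\m_\A(Q(x_0+\ep B_X),Y)\le(\|x_0\|+\ep)^{n-j+1}\|Q\|_{\K_\A}$; substituting the induction hypothesis $\|Q\|_{\K_\A}\le C_{\ep,j-1}(x_0)\,\m_\A(P(x_0+\ep B_X),Y)$ finishes the bound. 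All that remains is the algebraic check that $\frac{(\|x_0\|+\ep)^{n-j+1}}{(n-j+1)\,\ep^{n-j}}\,C_{\ep,j-1}(x_0)=C_{\ep,j}(x_0)$, which I expect to be the main (though purely bookkeeping) obstacle: one must track how the exponents $nj-j(j+1)/2$ and $n(j-1)-j(j-1)/2$ grow by $n-j$ and $n-j+1$ respectively at each iteration, and a direct computation confirms this.
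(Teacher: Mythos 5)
Your proof is correct and follows essentially the same route as the paper's: the paper handles the case $j=1$ with the finite root-of-unity averaging formula $P_{x_0}(x)=\frac{1}{n^2(n-1)^{n-1}}\sum_{k=0}^{n-1}\xi^k P(x_0+(n-1)\xi^k x)$ of Carando--Dimant--Muro, which is precisely the discrete version of your Cauchy integral (as you yourself note), and then iterates exactly as you do, passing from $P_{x_0^{j-1}}$ to $P_{x_0^{j}}$ and using the homogeneity bound $\m_\A(P_{x_0^{j-1}}(x_0+\ep B_X),Y)\leq(\|x_0\|+\ep)^{n-j+1}\|P_{x_0^{j-1}}\|_{\K_\A}$. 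The constant recursion you verify algebraically is the same one the paper leaves implicit in its closing ``an inductive argument gives the result.''
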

\begin{proof}
Suppose that $P(x_0+\ep B_X)$ is relatively $\A$-compact for some $\ep>0$. First we will show that $P_{x_0}$ is $\A$-compact. For any $x \in X$  and $\xi \in \mathbb C$ is a primary $n$-root of the unit we have
$$
P_{x_0}(x)=\vP(x_0,x^{n-1})=\frac{1}{n^2 \ (n-1)^{n-1}} \sum_{j=0}^{n-1} \xi^j P(x_0+(n-1)\xi^j x),
$$
see \cite[Corollary 1.8]{CaDiMu}. Then,
$$
P_{x_0}(\frac{\ep}{n-1} B_X)\subset \frac{1}{n^2 \ (n-1)^{n-1}} \sum_{j=0}^{n-1} \xi^j P(x_0+\ep B_X),
$$
which shows that $P_{x_0}$ is $\A$-compact at $0$ and therefore, $\A$-compact. Moreover, 

$$
\begin{array}{rl}
\|P_{x_0}\|_{\K_\A}&\d =(\frac{n-1}{\ep})^{n-1}\m_\A(P_{x_0}(\frac{\ep}{n-1} B_X),Y)\\
&\d \leq (\frac{n-1}{\ep})^{n-1} \frac{1}{n^2 \ (n-1)^{n-1}} \m_\A(\sum_{j=0}^{n-1} \xi^j P(x_0+\ep B_X),Y)\\
&\d \leq \frac{1}{\ep^{n-1} \ n^2} \sum_{j=0}^{n-1} \m_\A(P(x_0+\ep B_X),Y)\\
&\d = \frac{1}{\ep^{n-1} \ n} \m_\A(P(x_0+\ep B_X),Y).
\end{array}
$$

Now, since $P_{x_0}$ is $\A$-compact, it is $\A$-compact at $x_0$. Reasoning as above, we get that $(P_{x_0})_{x_0}=P_{(x_0)^2} \in \p_{\K_\A}^{n-2}(X;Y)$ and

$$
\begin{array}{rl}
\|P_{(x_0)^2}\|_{\K_\A}&\d \leq \frac{1}{\ep^{n-2} \ (n-1)} \m_\A(P_{x_0}(x_0+\ep B_X),Y)\\
&\d \leq \frac{(\|x_0\|+\ep)^{n-1}}{\ep^{n-2} \ (n-1)}  \|P_{x_0}\|_{\K_\A}\\
&\d \leq \frac{(\|x_0\|+\ep)^{n-1}}{\ep^{n-2} \ (n-1)} \frac{1}{\ep^{n-1} \ n}\m_\A(P(x_0+\ep B_X),Y)\\
&\d = \frac{(\|x_0\|+\ep)^{n-1}}{n (n-1) \ \ep^{2n-3}} \m_\A(P(x_0+\ep B_X),Y).
\end{array}
$$

An inductive argument gives the result.
\end{proof}

\begin{proposition}\label{Prop_A-comp_poly}
Let $X$ and $Y$ be Banach spaces, $\A$ a Banach operator ideal and $P\in \p^n(X;Y)$. The following are equivalent.
\begin{enumerate}[\upshape (i)]
\item $P \in \p_{\K_\A}^n(X;Y)$.
\item There exist $x_0 \in X$ such that $P$ is $\A$-compact at $x_0$.
\item $P$ is $\A$-compact at $0$.
\end{enumerate}
\end{proposition}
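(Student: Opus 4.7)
The equivalences follow a triangle (i)~$\Rightarrow$~(ii)~$\Rightarrow$~(iii)~$\Rightarrow$~(i). The implications (i)~$\Rightarrow$~(ii) (trivially) and (i)~$\Leftrightarrow$~(iii) have already been established in the discussion preceding Lemma~\ref{lemma_P_a_Acompact}, so the only nontrivial point is (ii)~$\Rightarrow$~(iii). My plan is to combine Lemma~\ref{lemma_P_a_Acompact} with the Newton-type expansion for homogeneous polynomials and the subadditivity of $\m_\A$ recalled in Section~1.

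Assume that $P$ is $\A$-compact at $x_0$, i.e.\ $P(x_0+\ep B_X)$ is relatively $\A$-compact for some $\ep>0$. Polarizing $\vP$ via the binomial identity yields
$$
P(x_0+y)=\sum_{j=0}^{n}\binom{n}{j}\vP(x_0^j,y^{n-j})=\sum_{j=0}^{n}\binom{n}{j} P_{x_0^j}(y),
$$
where $P_{x_0^0}=P$ and $P_{x_0^n}=P(x_0)$ is a constant. Isolating the $j=0$ term gives
$$
P(y)= P(x_0+y)-\sum_{j=1}^{n-1}\binom{n}{j}P_{x_0^j}(y)-P(x_0).
$$
I then evaluate this pointwise identity on $y\in \ep B_X$. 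By hypothesis, $\{P(x_0+y):y\in \ep B_X\}=P(x_0+\ep B_X)$ is relatively $\A$-compact. By Lemma~\ref{lemma_P_a_Acompact}, for each $1\le j<n$ the polynomial $P_{x_0^j}$ belongs to $\p^{n-j}_{\K_\A}(X;Y)$, so $P_{x_0^j}(\ep B_X)=\ep^{n-j} P_{x_0^j}(B_X)$ is relatively $\A$-compact. The singleton $\{P(x_0)\}$ is trivially so.

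Therefore $P(\ep B_X)$ is contained in a finite linear combination of relatively $\A$-compact sets, which is itself relatively $\A$-compact by the subadditivity of $\m_\A$ recalled in Section~1. Hence $P$ is $\A$-compact at $0$, which gives (iii) and completes the cycle. The main obstacle is really Lemma~\ref{lemma_P_a_Acompact}, which has already been proved; beyond it, the only points to verify are the correctness of the polynomial identity (standard polarization) and the fact that finite sums and translates of relatively $\A$-compact sets stay relatively $\A$-compact, which is immediate from the inequality $\m_\A(\lambda K_1+K_2,Y)\le |\lambda|\m_\A(K_1,Y)+\m_\A(K_2,Y)$.
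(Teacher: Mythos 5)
Your proof is correct and is essentially the paper's own argument: the paper likewise reduces everything to (ii) $\Rightarrow$ (iii), expands $P(x)=P(x+x_0-x_0)=\sum_{j=0}^{n}\binom{n}{j}(-1)^{j}P_{(x_0)^j}(x+x_0)$ via the same binomial identity, invokes Lemma~\ref{lemma_P_a_Acompact} for the intermediate terms, and concludes from the stability of relatively $\A$-compact sets under finite sums. Your rearrangement (solving for $P(y)$ and evaluating on $\ep B_X$) is just the same identity read in the other direction, and if anything it treats the $j=0$ term slightly more carefully than the paper, which writes the inclusion over $B_X$ even though the hypothesis only controls $P(x_0+\ep B_X)$.
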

\begin{proof}
We only need to prove that (ii) implies (iii). Suppose that $P$ is $\A$-compact at $x_0$. For any $x \in X$ we have
$$
P(x)=P(x+x_0-x_0)=\sum_{j=0}^{n} \genfrac{(}{)}{0pt}{}{n}{j}(-1)^j\vP(x_0^j,(x+x_0)^j)=\sum_{j=0}^{n} \genfrac{(}{)}{0pt}{}{n}{j}(-1)^jP_{(x_0)^j}(x+x_0),
$$
and
$$
P(B_X)\subset \sum_{j=0}^{n}(-1)^j \genfrac{(}{)}{0pt}{}{n}{j}P_{(x_0)^j}(x_0+B_X).
$$
By the above lemma,  $P_{(x_0)^j}$ is $\A$-compact for every $j<n$, and the result follows.
\end{proof}

The above proposition generalizes \cite[Proposition~3.3]{AMR}, where the result was obtained for $p$-compact mappings.

Carando, Dimant and Muro \cite{CaDiMu} (see also \cite{BoBrJuPe}) introduce the concept of {\it coherent} sequences of $n$-homogeneous Banach ideals of polynomials associated with Banach ideal of operators as follows. Given a Banach operator ideal $\A$, the sequence $(\A^n)_n$ of $n$-homogeneous polynomials is coherent and associated with $\A$ if there exist positive constants $A$ and $B$ such that for every Banach spaces $X$ and $Y$ the following conditions hold:
\begin{enumerate}[\upshape (a)]
\item $\A^1=\A$
\item For each $P \in \A^{n+1}(X;Y)$ and $x_0 \in X$, $P_{x_0} \in \A^n(X;Y)$ and
$$
\|P_{x_0}\|_{\A^n} \leq A \|P\|_{\A^{n+1}} \|x_0\|.
$$
\item For each $P\in \A^n(X;Y)$ and $x' \in X'$, $x'P \in \A^{n+1}(X;Y)$ and
$$
\|x'P\|_{\A^{n+1}}\leq B\|x'\| \|P\|_{\A^n}.
$$
\end{enumerate}

The sequence $(\p^n_{\K_\A})_n$, fulfills condition (c) with $B=1$. Also, $\p^1_{\K_\A}(X;Y)=\K_\A(X;Y)$. Finally, if $P \in \p^{n+1}_{\K_\A}(X;Y)$ and $x_0 \in X$, by Lemma~\ref{lemma_P_a_Acompact}, we have $P_{x_0} \in \p^n_{\K_\A}(X;Y)$ and
$$
\|P_{x_0}\|_{\K_\A}=\frac{1}{\ep^{n-1} \ n} \m_\A(P(x_0+\ep B_X),Y)\leq \frac{(\|x_0\|+\ep)^n}{\ep^{n-1} \ n} \|P\|_{\K_\A},
$$
for every $\ep>0$. Hence, choosing $\ep=n \|x_0\|$ in the above inequality, we have
$$
\|P_{x_0}\|_{\K_\A}\leq (\frac{1+n}{n})^n \|x_0\| \|P\|_{\K_\A} \leq e \|x_0\|  \|P\|_{\K_\A}.
$$
Summarizing, we have proved the following.

\begin{proposition}\label{Prop_A-comp coherent}
Let $\A$ be a Banach operator ideal. The sequence $(\p^n_{\K_\A})_n$ is a coherent sequence associated with the Banach ideal $\K_\A$.
\end{proposition}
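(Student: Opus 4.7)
The plan is to verify the three defining conditions of a coherent sequence associated with the Banach operator ideal $\K_\A$, namely (a) $(\p^n_{\K_\A})$ at $n=1$ coincides with $\K_\A$, (b) a derivative-type bound $\|P_{x_0}\|_{\K_\A} \le A\|P\|_{\K_\A}\|x_0\|$, and (c) a multiplication-type bound $\|x'P\|_{\K_\A}\le B\|x'\|\|P\|_{\K_\A}$. Condition (a) is essentially tautological: an element of $\p^1_{\K_\A}(X;Y)$ is a linear map $T$ whose image of $B_X$ is relatively $\A$-compact, which is precisely the definition of $\K_\A(X;Y)$, and the two norms both equal $\m_\A(T(B_X),Y)$.

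For condition (c), the plan is to observe that $(x'P)(x) = x'(x)P(x)$, and for $x \in B_X$ we have $|x'(x)| \le \|x'\|$, hence $(x'P)(B_X)\subset \|x'\|\,\overline D\cdot P(B_X)$ where $\overline D$ is the closed unit disk in $\C$. If $P(B_X)\subset T(M)$ with $T\in\A(Z;Y)$ and $M\subset B_Z$ compact, then $\overline D\cdot M\subset B_Z$ is still compact and $\overline D\cdot P(B_X)\subset T(\overline D\cdot M)$; taking infima over such factorizations gives $\m_\A(\overline D\cdot P(B_X),Y)\le \m_\A(P(B_X),Y)$. Combining, $\|x'P\|_{\K_\A}\le \|x'\|\|P\|_{\K_\A}$, so $B=1$.

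The main step is condition (b), and this is exactly where Lemma~\ref{lemma_P_a_Acompact} pays off. Given $P\in \p^{n+1}_{\K_\A}(X;Y)$ and $x_0\in X$, the lemma (applied with $j=1$ and polynomial degree $n+1$) yields
\[
\|P_{x_0}\|_{\K_\A}\;\le\; \frac{1}{(n+1)\,\ep^{n}}\;\m_\A\bigl(P(x_0+\ep B_X),Y\bigr)
\]
for every $\ep>0$. The next step is to bound the right-hand side globally: since $x_0+\ep B_X\subset (\|x_0\|+\ep)B_X$ and $P$ is $(n+1)$-homogeneous, one has $P(x_0+\ep B_X)\subset (\|x_0\|+\ep)^{n+1}P(B_X)$, whence $\m_\A(P(x_0+\ep B_X),Y)\le (\|x_0\|+\ep)^{n+1}\|P\|_{\K_\A}$.

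The final move is to optimize in $\ep$. Substituting $\ep=n\|x_0\|$ gives
\[
\|P_{x_0}\|_{\K_\A}\;\le\;\frac{(\|x_0\|(n+1))^{n+1}}{(n+1)(n\|x_0\|)^{n}}\|P\|_{\K_\A}\;=\;\Bigl(\frac{n+1}{n}\Bigr)^{n}\|x_0\|\,\|P\|_{\K_\A}\;\le\; e\,\|x_0\|\,\|P\|_{\K_\A},
\]
so condition (b) holds with $A=e$. No step looks delicate; the real work sits inside Lemma~\ref{lemma_P_a_Acompact} (already established) and the remainder is bookkeeping with the scaling of relatively $\A$-compact sets recorded in the preliminaries.
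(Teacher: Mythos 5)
Your proof is correct and follows essentially the same route as the paper: conditions (a) and (c) are verified directly (with $B=1$), and condition (b) comes from Lemma~\ref{lemma_P_a_Acompact} applied with $j=1$ followed by the homogeneity bound and the choice $\ep = n\|x_0\|$, yielding $A=e$. The only differences are cosmetic: you spell out the scaling argument for condition (c), which the paper asserts without proof, and your bookkeeping of the lemma's constant for an $(n+1)$-homogeneous polynomial (namely $\frac{1}{(n+1)\ep^{n}}$ and the exponent $n+1$ on $\|x_0\|+\ep$) is actually more consistent than the indices displayed in the paper, though both reach the same final bound.
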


\section{$\A$-compact holomorphic functions}

In this section we deal with $\A$-compact holomorphic functions. For a fixed Banach operator ideal $\A$, a holomorphic function $f\colon X\rightarrow Y$ is said to be $\A$-compact at $x \in X$ if there exist $\ep>0$ such that $f(x_0+\ep B_X)$ is relatively $\A$-compact in $Y$. If $f$ is $\A$-compact at every $x \in X$, then it is called $\A$-compact. We denote by $\H_{\K_\A}(X;Y)$ the space of all  holomorphic mappings between $X$ and $Y$. In the case when $\A=\K$ we cover the compact holomorphic function defined in \cite{AS} and, when $\A=\N^p$, $1\leq p <\infty$, we cover the $p$-compact holomorphic function defined in \cite{AMR} and studied in \cite{LaTur1}. Recently, Aron and Rueda introduced the $\A$-bounded holomorphic function \cite{AR3}. Is not hard to see that $\A$-compact and $\K_\A$-bounded holomorphic functions coincides. 

Aron and Schottenloher established the following result concerning compact holomorphic functions \cite[Proposition~3.4]{AS}.

\begin{proposition}[Aron-Schottenloher]\label{Prop_AS} Let $X$ and $Y$ be Banach spaces and $f \in \H(X;Y)$. The following are equivalent:
\begin{enumerate}[\upshape (i)]
\item $f$ is compact.
\item For all $n \in \Na$ and all $x \in X$, $P_nf(x)$ is an $n$-homogeneous compact polynomial.
\item For all $n \in \Na$ $P_nf(0)$ is an $n$-homogeneous compact polynomial.
\item $f$ compact at $0$.
\end{enumerate}
\end{proposition}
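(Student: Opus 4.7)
The plan is to establish the cycle (i) $\Rightarrow$ (ii) $\Rightarrow$ (iii) $\Rightarrow$ (iv) $\Rightarrow$ (i). The central tool is the Cauchy integral formula
$$
P_n f(x)(u) = \frac{1}{2\pi r^n} \int_0^{2\pi} e^{-int} f(x + re^{it}u)\, dt,
$$
valid for $u \in B_X$ and $r$ strictly below the radius of convergence of $f$ at $x$.

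For (i) $\Rightarrow$ (ii), I would fix $x \in X$ and choose $\ep > 0$ with $K := \overline{f(x + \ep B_X)}$ compact. Taking $r = \ep$ in the formula, the integrand takes values in $K$, so by Mazur's theorem the Bochner integral lies in the compact set $\overline{\co}\{K\}$; hence $P_n f(x)(B_X) \subset \ep^{-n}\overline{\co}\{K\}$ is relatively compact. Implication (ii) $\Rightarrow$ (iii) is immediate on setting $x = 0$. For (iii) $\Rightarrow$ (iv), choose $\delta$ such that the Taylor series at $0$ converges uniformly on $\delta B_X$; given $\eta > 0$, pick $N$ with $\sum_{n > N}\|P_n f(0)\|\delta^n < \eta$. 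Then $f(\delta B_X)$ lies within $\eta$ of the relatively compact finite Minkowski sum $\sum_{n \leq N} P_n f(0)(\delta B_X)$, which yields $f(\delta B_X)$ totally bounded and hence relatively compact.

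The main step is (iv) $\Rightarrow$ (i), which proceeds in two stages. First, the Cauchy argument of (i) $\Rightarrow$ (ii) applied at $x = 0$ shows each $P_n f(0)$ is compact. Then, for $x$ with $\|x\| < R$, where $R$ is the radius of convergence of $f$ at $0$, the standard identity
$$
P_n f(x) = \sum_{k \geq n} \binom{k}{n}(P_k f(0))_{x^{k-n}}
$$
converges in $\p^n(X;Y)$; each summand is compact by Lemma~\ref{lemma_P_a_Acompact} applied with $\A = \K$, and compact polynomials form a norm-closed subspace, so $P_n f(x)$ is compact for every $n$. Applying (iii) $\Rightarrow$ (iv) at $x$ then shows $f$ is compact throughout $R B_X$.

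The principal obstacle is propagating compactness from $R B_X$ to all of $X$, since $R$ can be finite even for $f$ holomorphic on all of $X$. My plan is to set $\Omega := \{x \in X : f \text{ is compact at } x\}$ and to note that $\Omega$ is open (if $\|x'-x\| < \ep$ and $f(x + \ep B_X)$ is relatively compact, then $f(x'+(\ep-\|x'-x\|)B_X) \subset f(x + \ep B_X)$), then iterate the two-stage argument at each $x \in \Omega$, using that the radius of convergence at $x$ is positive. A connectedness argument along each segment $[0,y]$ then yields $\Omega = X$.
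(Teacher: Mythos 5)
The paper itself does not prove this proposition: it is quoted from Aron--Schottenloher \cite{AS} (their Proposition~3.4), and the only ingredient of its proof that the paper imports is the Cauchy-integral inclusion recorded as Lemma~\ref{Lemma_AMR_Cauchy}. So your argument must stand on its own. Its skeleton is sound, and it is worth noting that it reproduces, specialized to $\A=\K$ (where $\m_{\K}(K,X)=\sup_{x\in K}\|x\|$ and $\|\cdot\|_{\K_\K}=\|\cdot\|$), exactly the machinery the paper builds for general $\A$: your (i)$\Rightarrow$(ii) is Lemma~\ref{Lemma_AMR_Cauchy}, your (iii)$\Rightarrow$(iv) is the tail argument behind Lemma~\ref{Lemma_Suma_A-compactos} and Proposition~\ref{Prop_A-compact_equiv}, and your two-stage step is Proposition~\ref{Prop_A-comp1pt_y_otro}. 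The two points where you invoke closedness of the ideal $\K$ (uniform approximation preserves compactness of the image, and compact polynomials are norm-closed in $\p^n(X;Y)$) are exactly the right ones: they are what fails for general $\A$ by Example~\ref{exam_A-compact_at_0}. A minor unproved assertion: the norm convergence in $\p^n(X;Y)$ of $P_nf(x)=\sum_{k\geq n}\binom{k}{n}(P_kf(0))_{x^{k-n}}$ for all $\|x\|<R$. With the crude polarization bound $\|\vP\|\leq (k^k/k!)\|P\|$ one only gets it for $\|x\|<R/e$; it does hold on all of $RB_X$ (integrate the uniformly convergent Taylor series of $f$ at $0$ against the Cauchy kernel on a circle of radius $r<R-\|x\|$), but in any case a ball of radius $R_x/e$ would serve your purposes equally well, so this is harmless.

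The genuine gap is the final connectedness argument. What you have established is: $\Omega$ is open, $0\in\Omega$, and $x\in\Omega$ implies $x+R_xB_X\subset\Omega$, where $R_x$ is the radius of convergence of $f$ at $x$. These three facts alone do \emph{not} imply $\Omega=X$: if $R_x$ were allowed to decay like the distance to the boundary of $\Omega$ (e.g.\ $\Omega$ a ball and $R_x=1-\|x\|$), all three would hold with $\Omega$ proper. For the clopen argument along $[0,y]$ you must also show that $S=\{t\in[0,1]\colon ty\in\Omega\}$ is closed, and this requires an analytic input you never invoke: local boundedness of $f$. Concretely, if $t_m\to t^*$ with $t_my\in\Omega$, choose $\delta>0$ with $f$ bounded on $t^*y+2\delta B_X$ (possible since $f$ is holomorphic, hence continuous, at $t^*y$ -- note $t^*y$ need not yet lie in $\Omega$); the Cauchy estimates then give $R_z\geq\delta$ for every $z\in t^*y+\delta B_X$, so for $m$ large, $\|t^*y-t_my\|<\delta\leq R_{t_my}$ and hence $t^*y\in t_my+R_{t_my}B_X\subset\Omega$. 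Equivalently, one may cite the $1$-Lipschitz property $R_{x'}\geq R_x-\|x-x'\|$ of the radius of boundedness of an entire function. With this lemma inserted, $S$ is open, closed and nonempty, so $S=[0,1]$, $\Omega=X$, and your proof is complete.
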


In \cite[Theorem~3.2]{Ryan_Wc} Ryan proved that the above result remains valid for functions in the class of weakly compact holomorphic mappings. Later, Gonz\'alez and Guti\'errez extended this result to the class of $\A$-bounded holomorphic mappings, whenever $\A$ is a closed surjective Banach operator ideal \cite[Proposition~5]{GonGu00}. Recall that closed Banach ideals are those which, endowed with the usual norm, forms a closed subspace. Here we will show some coincidences but, which is more interesting, some differences when dealing with the class of $\A$-compact holomorphic functions. Indeed, we will show that, in general $\A$-compact holomorphic functions behave more like nuclear than compact maps. Our result extend that of \cite{LaTur1} given in the context of $p$-compactness. In order to proceed, we will consider the $\A$-compact radius of convergence of a function $f$ at $x_0 \in X$, which can be see as natural extension of the Cauchy-Hadamard formula considering the $\A$-compact norm of polynomials. If $\sum_{n=0}^\infty P_nf(x_0)$ is the Taylor series expansion of $f$ at $x_0$, we say that
$$
r_{\K_\A}(f,x_0)=1/\limsup \|P_nf(x_0)\|_{\K_\A}^{1/n}
$$
is the radius of $\A$-compact convergence of $f$ at $x_0$. If $f$ is not $\A$-compact at $x$, then $r_{\K_\A}(f;x)=0$. When $\A=\N^p$, we cover the definition of $p$-compact radius of convergence \cite[Definition~3.2]{LaTur1}.
  
The key to show that (i) implies (ii) or (iv) implies (iii) in Proposition~\ref{Prop_AS} relies in the following lemma, whose proof can be found in \cite[Proposition 3.4]{AS}.

\begin{lemma}\label{Lemma_AMR_Cauchy}
Let $X$ and $Y$ be Banach spaces and $f\in \H(X;Y)$. For every $x_0 \in X$, $\ep >0$ and $n \in \Na$ the following inclusion holds
$$
P_nf(x_0)(\ep B_X) \leq \overline{\co\{f(x_0+\ep B_X)\}}. 
$$
\end{lemma}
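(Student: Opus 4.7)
The plan is to use the classical vector-valued Cauchy integral formula for the Taylor coefficients of a holomorphic map. Fix $x_0 \in X$ and $x \in \ep B_X$, i.e., $\|x\| < \ep$. First I would observe that the one-variable map $g\colon \mathbb{C}\to Y$ defined by $g(\lambda)=f(x_0+\lambda x)$ is entire: for any $\lambda_0 \in \mathbb{C}$, expanding $f$ at $x_0 + \lambda_0 x$ and substituting $\xi = (\lambda-\lambda_0)x$ yields a power series representation of $g$ on a neighborhood of $\lambda_0$. Since the Taylor expansion of $g$ at the origin is $g(\lambda) = \sum_{k\ge 0} P_kf(x_0)(x)\,\lambda^k$, Cauchy's formula for the $n$-th coefficient gives
$$P_n f(x_0)(x) \;=\; \frac{1}{2\pi i} \oint_{|\lambda|=1}\frac{g(\lambda)}{\lambda^{n+1}}\,d\lambda \;=\; \frac{1}{2\pi} \int_0^{2\pi} e^{-in\theta}\, f(x_0 + e^{i\theta} x)\, d\theta,$$
where the contour integral is understood as a Bochner integral of the continuous $Y$-valued integrand.

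Next, I would identify the integrand as a curve lying inside the absolutely convex hull of $f(x_0+\ep B_X)$. For every $\theta\in[0,2\pi]$ one has $\|e^{i\theta}x\|=\|x\|<\ep$, hence $f(x_0+e^{i\theta}x)\in f(x_0+\ep B_X)$, and multiplying by the unimodular scalar $e^{-in\theta}$ keeps the result inside $\co\{f(x_0+\ep B_X)\}$. This is precisely where the argument needs the absolutely convex hull rather than the ordinary one, since the rotation factor $e^{-in\theta}$ is a complex scalar of modulus one, not a nonnegative real.

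Finally, I would pass to the limit using Riemann sums. Partitioning $[0,2\pi]$ uniformly with nodes $\theta_j$, the approximants
$$S_N \;=\; \sum_{j=1}^N \frac{1}{N}\,e^{-in\theta_j}\,f(x_0+e^{i\theta_j}x)$$
are genuine convex combinations of points of $\co\{f(x_0+\ep B_X)\}$ and therefore lie in that absolutely convex set. Continuity of $\theta \mapsto e^{-in\theta}f(x_0+e^{i\theta}x)$, together with uniform continuity on the compact interval, yields $S_N \to P_nf(x_0)(x)$ in norm, so the limit belongs to $\overline{\co\{f(x_0+\ep B_X)\}}$, as claimed. The only conceptual step requiring care is the recognition that the phase factor $e^{-in\theta}$ in the Cauchy formula forces one to work throughout with the absolutely convex hull; once that is accepted, the approximation argument is completely standard.
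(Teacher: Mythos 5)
Your proof is correct and is essentially the argument the paper relies on: the paper does not prove the lemma itself but cites \cite[Proposition~3.4]{AS}, whose proof is exactly this one --- write $P_nf(x_0)(x)$ via the vector-valued Cauchy integral formula over the unit circle and approximate the integral by Riemann sums, which are absolutely convex combinations of points of $f(x_0+\ep B_X)$, so the limit lies in $\overline{\co\{f(x_0+\ep B_X)\}}$. Your observation that the unimodular factor $e^{-in\theta}$ is precisely why the \emph{absolutely} convex hull is needed is the right point of care, and the rest is the standard argument.
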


In particular, Lemma~\ref{Lemma_AMR_Cauchy} implies that if $P_nf(x_0)$ fails to be $\A$-compact for some $n \in \Na$, then $r_{\K_\A}(f,x_0)=0$. Also, if $f$ in $\H(X;Y)$ is $\A$-compact at $x_0$ then, for all $n \in \Na$, $P_nf(x_0)$ is $\A$-compact and, for each $n \in \Na$,
$$
\ep^n \|P_nf(x_0)\|_{\K_\A} =\m_\A(P_nf(x_0)(\ep B_X),Y)\leq \m_\A(\overline{\co\{f(x_0+\ep B_X)\}};Y).
$$
Therefore $r_{\K_\A}(f;x_0)>0$. It turns out that the converse also holds. To see this we need the following lemma, which extends \cite[Lemma~3.1]{LaTur1}.

\begin{lemma}\label{Lemma_Suma_A-compactos}
Let $X$ be a Banach space, $\A$ a Banach operator ideal and consider $(K_n)_n$ a sequence of relatively $\A$-compact sets in $X$. If $\sum_{j=1}^{\infty} \m_\A(K_j,X)<\infty$, then the set $K=\{\sum_{j=1}^{\infty} x_j \colon x_j \in K_j\}$ is relatively $\A$-compact and $\m_\A(K,X)\leq \sum_{j=1}^{\infty} \m_\A(K_j,X)$.
\end{lemma}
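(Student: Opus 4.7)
The plan is to realize $K$ as the image of the unit ball of an $\ell_\infty$-direct sum under a single operator in $\K_\A$, which sidesteps the need to construct an ambient compact set by hand.

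Fix $\epsilon>0$. Using the equivalence, recorded in the preliminaries, between relatively $\A$-compact sets and $\K_\A$-bounded sets, together with the identity $\|R\|_{\K_\A}=\m_\A(R(B_W),X)$, one sees that $\m_\A(K_j,X)$ equals the infimum of $\|R\|_{\K_\A}$ over all $R\in\K_\A(W;X)$ with $K_j\subset R(B_W)$; the nontrivial direction of this equality uses the Minkowski functional of $\co\{M_j\}$ to turn the compact set appearing in the definition of $\m_\A$ into the unit ball of a Banach space. Accordingly, choose for each $j$ a Banach space $W_j$ and an operator $R_j\in\K_\A(W_j;X)$ with $K_j\subset R_j(B_{W_j})$ and $\|R_j\|_{\K_\A}\leq\m_\A(K_j,X)+\epsilon/2^j$.

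Next, set $V=(\bigoplus_j W_j)_{\ell_\infty}$ and define $S\colon V\to X$ by $S((w_j))=\sum_jR_j w_j$. The critical choice is the $\ell_\infty$-norm: the canonical projections $P_j\colon V\to W_j$ are contractive, so each composition $R_j\circ P_j$ lies in $\K_\A(V;X)$ with $\|R_j\circ P_j\|_{\K_\A}\leq\|R_j\|_{\K_\A}$. Since $\sum_j\|R_j\|_{\K_\A}<\infty$, the series $\sum_jR_j\circ P_j$ converges absolutely in the Banach ideal $\K_\A(V;X)$; its limit coincides pointwise with $S$, so $S\in\K_\A(V;X)$ with $\|S\|_{\K_\A}\leq\sum_j\m_\A(K_j,X)+\epsilon$. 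To finish, given any $x=\sum_jx_j\in K$, lift each $x_j=R_jw_j$ with $w_j\in B_{W_j}$; then $\|(w_j)\|_V=\sup_j\|w_j\|\leq 1$ and $S((w_j))=x$, so $K\subset S(B_V)$. This yields $\m_\A(K,X)\leq\|S\|_{\K_\A}\leq\sum_j\m_\A(K_j,X)+\epsilon$, and letting $\epsilon\to 0$ finishes the argument.

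The delicate point is the compatibility of two opposing requirements: absolute convergence of $\sum_jR_j\circ P_j$ in the $\K_\A$-norm (which forbids projections of norm greater than $1$), and the existence of lifts $(w_j)$ of points of $K$ with $\|(w_j)\|_V\leq 1$ (which forbids $\ell_1$-type sums, since the lifts are only known coordinatewise to satisfy $\|w_j\|\leq 1$). The $\ell_\infty$-direct sum is the one choice that meets both constraints at once; once it is spotted, the rest of the proof reduces to routine Banach-ideal bookkeeping.
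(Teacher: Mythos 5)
Your proof is correct, and it takes a route that is genuinely different from the paper's in one key respect, even though both arguments share the same skeleton: an $\ell_\infty$-direct sum of the factoring spaces, contractive coordinate projections, and an operator defined by a series that converges absolutely because $\sum_j \m_\A(K_j,X)<\infty$. The difference is where the compactness work is done. The paper argues straight from the definition of $\m_\A$: each $K_j$ is covered by $T_j(M_j)$ with $T_j\in\A(Z_j;X)$ and $M_j\subset B_{Z_j}$ compact, so it must exhibit a compact subset of the direct sum covering $K$; since $\prod_j M_j$ need not be norm compact, the paper inserts a weight sequence $(\beta_j)_j\in B_{c_0}$, takes $M=\prod_j \beta_j M_j$ (totally bounded because $\beta_j\to 0$), and compensates by using $\frac{1}{\beta_j}T_j$ in place of $T_j$ --- which is precisely why it must choose $(\beta_j)_j$ so that $\sum_j \frac{1}{\beta_j}\m_\A(K_j,X)$ still converges. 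You bypass this weight trick entirely by first invoking the identification of relatively $\A$-compact sets with $\K_\A$-bounded sets and the identity $\m_\A(K_j,X)=\inf\{\|R\|_{\K_\A}\colon K_j\subset R(B_W)\}$ (the combination of \cite[Theorem~2.1]{CaSt} and \cite[Corollary~1.9]{LaTur2} recorded in the paper's preliminaries), so that each $K_j$ sits inside the unit-ball image of a single $\A$-compact operator and the series can be summed in the Banach ideal $\K_\A$ with no compact set ever constructed. What each buys: the paper's proof is self-contained from the definition of $\m_\A$, while yours is shorter and cleaner but outsources the compactness bookkeeping to the cited equivalence, whose proof (via the Minkowski gauge of the closed absolutely convex hull, as you indicate) is where essentially the same work is done once and for all. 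Two minor points to tidy: your lifts only give $\sup_j\|w_j\|\leq 1$, i.e. $(w_j)_j$ in the \emph{closed} unit ball of $V$, so either rescale by $1+\ep$ or note that $\m_\A$ is unaffected by closures (sets of the form $T(M)$ with $M$ compact are closed); and the same open-versus-closed ball care is needed in your reduction to operators $R_j$, though this costs only the $\ep/2^j$ you have already budgeted.
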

\begin{proof}
Fix $\ep>0$ and $j \in \Na$. There exist a Banach space $Z_j$, a compact set $M_j\subset B_{Z_j}$ and a linear operator $T_j \in \A(Z_j;X)$ such that $K_j\subset T_j(M_j)$ and $\|T_j\|_{\A}\leq (1+\ep)\m_\A(K_j,X)$. Since $\sum_{j=1}^{\infty} \m_\A(K_j,X)<\infty$, there exists a sequence $(\beta_j)_j \in B_{c_0}$ such that $\sum_{j=1}^{\infty} \frac{1}{\beta_j} \m_\A(K_j,X)\leq (1+\ep)\sum_{j=1}^\infty \m_\A(K_j,X)$. Define the Banach space $Z=\prod_{j=1}^{\infty}{Z_j}$ endowed with the norm $\|z\|_Z=\sup_{j \in \Na} \|z_j\|_{Z_j}$, if $z=(z_j)_j$. Define the linear operator $T\colon Z\rightarrow X$ as $T=\sum_{j=1}^{\infty} \frac{1}{\beta_j} T_j \circ \pi_j$, where $\pi_j\colon Z\rightarrow Z_j$ is the canonical projection, $\|\pi_j\|=1$. Since
$$
\begin{array}{rl}
\d \sum_{j=1}^{\infty} \|\frac{1}{\beta_j} T_j \circ \pi_j\|_\A & \d \leq \sum_{j=1}^{\infty} \|\frac{1}{\beta_j} T_j\|_\A\\
&\d \leq (1+\ep)\sum_{j=1}^{\infty} \frac{1}{\beta_j}\m_\A(K_j;X)\\
&\d \leq (1+\ep)^2 \sum_{j=1}^{\infty} \m_\A(K_j;X) <\infty,
\end{array}
$$
then $T\in \A(Z;X)$ and $\|T\|_\A \leq (1+\ep)^2\sum_{j=1}^{\infty} \m_\A(K_j;X)$. Finally, consider the compact set $M\subset B_Z$ defined by $M=\prod_{j=1}^{\infty} \beta_j M_j$. Thus, $K\subset T(M)$, which implies that $K$ is a relatively $\A$-compact set. Moreover,
$$
\m_\A(K,X)\leq \|T\|_\A \leq (1+\ep)^2 \sum_{j=1}^{\infty} \m_\A(K_j,X).
$$
Since $\ep>0$ is arbitrary, the result follows.
\end{proof}

\begin{proposition}\label{Prop_A-compact_equiv} Let $X$ and $Y$ be Banach spaces, $\A$ a Banach operator ideal $x_0 \in X$ and $f \in \H(X;Y)$. The following are equivalent.
\begin{enumerate}[\upshape (i)]
\item $f$ is $\A$-compact at $x_0$.
\item For all $n \in \Na$, $P_nf(x_0) \in \p_{\K_\A}^n(X;Y)$ and $r_{\K_\A}(f,x_0)>0$.
\end{enumerate}
\end{proposition}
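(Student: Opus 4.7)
The proposition asserts the equivalence of $\A$-compactness of a holomorphic $f$ at a point $x_0$ with the conjunction of $\A$-compactness of every Taylor polynomial and positivity of the $\A$-compact radius of convergence. The plan is to obtain (i)$\Rightarrow$(ii) directly from the Cauchy-type inclusion in Lemma~\ref{Lemma_AMR_Cauchy}, and (ii)$\Rightarrow$(i) by an absolutely summable decomposition of $f(x_0+\ep B_X)$ to which Lemma~\ref{Lemma_Suma_A-compactos} applies.

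For (i)$\Rightarrow$(ii), I would fix $\ep>0$ for which $f(x_0+\ep B_X)$ is relatively $\A$-compact. Then $\overline{\co\{f(x_0+\ep B_X)\}}$ is relatively $\A$-compact (by the absolutely-convex-hull remark in the preliminaries) with the same size, so Lemma~\ref{Lemma_AMR_Cauchy} gives that $P_nf(x_0)(\ep B_X)$ is relatively $\A$-compact for every $n$. This in turn forces $P_nf(x_0)$ to be $\A$-compact at the origin, hence $\A$-compact by Proposition~\ref{Prop_A-comp_poly}, together with the quantitative estimate
$$
\ep^n\,\|P_nf(x_0)\|_{\K_\A}=\m_\A(P_nf(x_0)(\ep B_X),Y)\leq \m_\A(\overline{\co\{f(x_0+\ep B_X)\}},Y).
$$
Taking $n$-th roots and letting $n\to\infty$ yields $\limsup\|P_nf(x_0)\|_{\K_\A}^{1/n}\leq 1/\ep$, so $r_{\K_\A}(f,x_0)\geq \ep>0$.

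For (ii)$\Rightarrow$(i), assume every $P_nf(x_0)$ is $\A$-compact and $r:=r_{\K_\A}(f,x_0)>0$. Choose any $\ep$ with $0<\ep<r$. Because $\|\cdot\|\leq\|\cdot\|_{\K_\A}$ on polynomials, the ordinary radius of uniform convergence of the Taylor series at $x_0$ is at least $r$, so $f(x_0+y)=\sum_{n=0}^\infty P_nf(x_0)(y)$ converges uniformly for $y\in\ep B_X$. Set $K_n:=P_nf(x_0)(\ep B_X)=\ep^nP_nf(x_0)(B_X)$; then $K_n$ is relatively $\A$-compact with
$$
\m_\A(K_n,Y)=\ep^n\|P_nf(x_0)\|_{\K_\A}.
$$
Since $\limsup\|P_nf(x_0)\|_{\K_\A}^{1/n}=1/r<1/\ep$, a root-test comparison with a geometric series gives $\sum_{n=0}^\infty\m_\A(K_n,Y)<\infty$. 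Lemma~\ref{Lemma_Suma_A-compactos} then shows $K=\{\sum_{n=0}^\infty x_n:x_n\in K_n\}$ is relatively $\A$-compact in $Y$, and the pointwise expansion of $f$ gives the inclusion $f(x_0+\ep B_X)\subset K$, so $f$ is $\A$-compact at $x_0$.

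The only subtle point is the passage (ii)$\Rightarrow$(i): one must be sure that $\ep<r$ simultaneously guarantees (a) the Taylor series converges on $x_0+\ep B_X$ in norm, and (b) the series $\sum\ep^n\|P_nf(x_0)\|_{\K_\A}$ converges, so that Lemma~\ref{Lemma_Suma_A-compactos} actually applies. Both follow from the inequality $\|P_nf(x_0)\|\leq \|P_nf(x_0)\|_{\K_\A}$ and the Cauchy–Hadamard formula, but it is worth flagging this because it is precisely the mechanism that makes the $\A$-compact radius $r_{\K_\A}$ the correct analogue of the usual radius of convergence for our purposes.
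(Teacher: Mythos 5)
Your proposal is correct and follows essentially the same route as the paper: (i)$\Rightarrow$(ii) via Lemma~\ref{Lemma_AMR_Cauchy} and the estimate $\ep^n\|P_nf(x_0)\|_{\K_\A}=\m_\A(P_nf(x_0)(\ep B_X),Y)\leq \m_\A(\overline{\co\{f(x_0+\ep B_X)\}},Y)$, and (ii)$\Rightarrow$(i) by decomposing $f(x_0+\ep B_X)$ into sums of elements of the sets $P_nf(x_0)(\ep B_X)$ and applying Lemma~\ref{Lemma_Suma_A-compactos} together with the geometric-series bound coming from $\ep<r_{\K_\A}(f,x_0)$. Your explicit verification that $\ep<r_{\K_\A}(f,x_0)$ also guarantees convergence of the Taylor series on $x_0+\ep B_X$ (via $\|\cdot\|\leq\|\cdot\|_{\K_\A}$) is a point the paper leaves implicit, but it is the same argument.
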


\begin{proof}
That (i) implies (ii) follows from Lemma~\ref{Lemma_AMR_Cauchy}. To see that (ii) implies (i), take $\ep>0$ such that $\ep < r_{\K_\A}(f,x_0)$ and $f(x)=\sum_{n=1}^{\infty} P_nf(x_0)(x-x_0)$, with uniform convergence in $x_0+\ep B_X$,. Hence, we have 
$$
f(x_0+\ep B_X)\subset \{\sum_{n=1}^{\infty} x_n \colon x_n \in P_nf(x_0)(\ep B_X)\}.
$$
By Lemma~\ref{Lemma_Suma_A-compactos}, to show that $f(x_0+\ep B_X)$ is a relatively $\A$-compact set, it enough to see that $\sum_{n=1}^{\infty} \m_\A(P_nf(x_0)(\ep B_X);Y) <\infty$, which is true since
$$
\sum_{n=1}^{\infty} \m_\A(P_nf(x_0)(\ep B_X);Y)=\sum_{n=1}^{\infty} \ep^n \m_\A(P_nf(x_0)(B_X);Y)=\sum_{n=1}^{\infty} \ep^n \|P_nf(x_0\|_{\K_\A}
$$
and $\ep < r_{\K_\A}(f,x_0)=1/\limsup\|P_nf(x_0)\|_{\K_\A}^{1/n}$.
\end{proof}

Since $\K_\A$ satisfy condition $\Gamma$, by \cite[Theorem~2.7]{AR3} we can describe the $\A$-compact radius of convergence of a function $f$ in $\H(X;Y)$ at $x_0 \in X$ as
$$
r_{\K_\A}(f;x_0)=\sup\{ t>0 \colon f(x_0+tB_X) \ {\rm is \ \A-compact}\}.
$$
With this, $\A$-compact holomorphic mappings have a natural local behavior which can be establish in terms of the radius of $\A$-compact convergence.

\begin{proposition}\label{Prop_A-comp1pt_y_otro}
Let $X$ and $Y$ be Banach spaces, $\A$ a Banach operator ideal, $x_0 \in X$ and $f \in \H(X;Y)$. If $f$ is $\A$-compact at $x_0$, then $f$ is $\A$-compact for all $x \in x_0 + r_{\K_\A}(f,x_0)B_X$. 
\end{proposition}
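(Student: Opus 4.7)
The plan is to combine the characterization of the $\A$-compact radius of convergence that appears just before the statement,
$$
r_{\K_\A}(f;x_0)=\sup\{ t>0 \colon f(x_0+tB_X) \text{ is relatively } \A\text{-compact}\},
$$
with the elementary fact that any subset of a relatively $\A$-compact set is itself relatively $\A$-compact (if $K\subset L\subset T(M)$ with $M$ compact and $T\in \A$, the same factorization serves $K$).

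The main step is then a triangle-inequality comparison. Writing $r=r_{\K_\A}(f,x_0)$ and taking an arbitrary $x=x_0+h$ with $\|h\|<r$, I would choose $\delta>0$ small enough that $\|h\|+\delta<r$. The characterization guarantees that $f\bigl(x_0+(\|h\|+\delta)B_X\bigr)$ is relatively $\A$-compact in $Y$. Since for every $y\in \delta B_X$ one has $\|h+y\|\leq\|h\|+\delta<r$, the inclusion
$$
x+\delta B_X \;=\; x_0+h+\delta B_X \;\subset\; x_0+(\|h\|+\delta)B_X
$$
holds, and therefore $f(x+\delta B_X)\subset f\bigl(x_0+(\|h\|+\delta)B_X\bigr)$. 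By the subset observation, $f(x+\delta B_X)$ is relatively $\A$-compact, so $f$ is $\A$-compact at $x$, as required.

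I do not foresee a real obstacle: once one accepts the supremum characterization of $r_{\K_\A}(f,x_0)$ (which is borrowed from \cite[Theorem~2.7]{AR3} and uses only that $\K_\A$ satisfies Condition~$\Gamma$), the proof reduces to the triangle inequality. The only point worth stating cleanly is that $\A$-compactness is hereditary for set inclusion; everything else is bookkeeping. Consequently the write-up should be no longer than a few lines.
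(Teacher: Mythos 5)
Your proof is correct and is essentially the paper's own argument: the proposition is stated there as a direct consequence of the supremum description $r_{\K_\A}(f;x_0)=\sup\{t>0 \colon f(x_0+tB_X) \ {\rm is\ relatively\ } \A{\rm -compact}\}$ quoted just before it, and your combination of that description with the hereditary property of relatively $\A$-compact sets under inclusion is exactly the intended (omitted) bookkeeping. One small precision worth recording: the step from $\|h\|+\delta<r_{\K_\A}(f,x_0)$ to the relative $\A$-compactness of $f\bigl(x_0+(\|h\|+\delta)B_X\bigr)$ itself already uses the hereditary fact, since the supremum need not be attained at $\|h\|+\delta$; one picks $t$ in the supremum set with $t>\|h\|+\delta$ and uses $f\bigl(x_0+(\|h\|+\delta)B_X\bigr)\subset f(x_0+tB_X)$.
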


\begin{corollary}\label{Coro_A-comp1pt_A-comp}
Let $X$ and $Y$ be Banach spaces, $\A$ a Banach operator ideal, $x_0 \in X$ and $f \in \H(X;Y)$. If $f$ is $\A$-compact at $x_0$ and $r_{\K_\A}(f,x_0)=\infty$, then $f$ is $\A$-compact.
\end{corollary}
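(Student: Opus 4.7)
The plan is to read the corollary as the ``$r=\infty$'' special case of Proposition~\ref{Prop_A-comp1pt_y_otro}. Since that proposition asserts $\A$-compactness at every point of $x_0+r_{\K_\A}(f,x_0)B_X$, and under the present hypothesis this ball should be interpreted as the whole space $X$, the conclusion that $f$ is $\A$-compact at every $x\in X$, i.e.\ $\A$-compact, follows at once.

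To make the ``infinite radius'' reading rigorous without feeding $\infty$ into the formulas of Proposition~\ref{Prop_A-comp1pt_y_otro}, I would argue by a short exhaustion using the supremum characterisation
$$
r_{\K_\A}(f;x_0)=\sup\{t>0\colon f(x_0+tB_X)\text{ is relatively $\A$-compact}\}
$$
stated just before Proposition~\ref{Prop_A-comp1pt_y_otro}. The hypothesis $r_{\K_\A}(f,x_0)=\infty$ then says that $f(x_0+tB_X)$ is relatively $\A$-compact for every finite $t>0$. Fix an arbitrary $x\in X$ and any $\ep>0$. Choosing $t>\|x-x_0\|+\ep$, the triangle inequality yields $x+\ep B_X\subset x_0+tB_X$, so $f(x+\ep B_X)\subset f(x_0+tB_X)$ inherits relative $\A$-compactness from the latter set. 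This exhibits an $\ep>0$ for which $f(x+\ep B_X)$ is relatively $\A$-compact, i.e.\ $f$ is $\A$-compact at $x$; since $x$ was arbitrary, $f$ is $\A$-compact.

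I do not anticipate any genuine obstacle here: the argument rests only on the supremum description of $r_{\K_\A}$ recalled above, the elementary inclusion $x+\ep B_X\subset x_0+tB_X$ when $t>\|x-x_0\|+\ep$, and the trivial fact that subsets of relatively $\A$-compact sets are relatively $\A$-compact (immediate from the definition $K\subset T(M)$). The role of the first hypothesis, that $f$ is $\A$-compact at $x_0$, is implicit in ensuring that the $P_nf(x_0)$ are themselves $\A$-compact so that the supremum characterisation applies; beyond that, no further ingredients are needed.
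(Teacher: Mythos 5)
Your proof is correct and takes essentially the same route as the paper, which states the corollary without a separate proof precisely because it is the $r_{\K_\A}(f,x_0)=\infty$ case of Proposition~\ref{Prop_A-comp1pt_y_otro}, itself resting on the supremum characterization $r_{\K_\A}(f;x_0)=\sup\{t>0 \colon f(x_0+tB_X) \ \text{is relatively $\A$-compact}\}$. Your exhaustion argument (subsets of relatively $\A$-compact sets are relatively $\A$-compact, plus the inclusion $x+\ep B_X\subset x_0+tB_X$ for $t>\|x-x_0\|+\ep$) is exactly the rigorous version of that implicit argument.
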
 

In virtue of Proposition~\ref{Prop_A-comp1pt_y_otro}, is natural to ask, for a given function $f \in \H(X;Y)$ which is $\A$-compact at $x_0 \in X$, if it is $\A$-compact beyond $x_0+r_{\K_\A}(f,x_0) B_X$. Thanks to the Josefson-Nissenzweig theorem we have, for any Banach spaces $X$ and $Y$ and any Banach operator ideal $\A$, an $\A$-compact holomorphic mapping $f \in \H_{\K_\A}(X;Y)$, whose $\A$-compact radius of convergence at the origin in finite. It is enough to consider a sequence $(x'_n)_n \subset X'$ with $\|x'_n\|=1$ for all $n \in \Na$ and $(x'_n)_n$ pointwise convergent to $0$. Then $f(x)=\sum_{n=1}^{\infty} x'_n(x)^n$ belongs to $\H(X)$ and, since it takes values in $\mathbb C$, is $\A$-compact for any Banach operator ideal $\A$. Also, since $\|x'_n\|^n=\|x'_n\|^n_{\K_\A}=1$, we have $r_{\K_\A}(f,0)=1$. The example can be modified to obtain a vector valued holomorphic function with similar properties. 

However, for a general Banach operator ideal $\A$, there exist a holomorphic mapping $f \in \H(\ell_1,X)$ which is $\A$-compact at the origin with $r_{\K_\A}(f,0)=1$ but $f$ fails to be $\A$-compact at $e_1$, the first element of the canonical basis of $\ell_1$. To give this example with need some lemmas. 

\begin{lemma}\label{Lemma_A-null_to_0} Let $X$ be a Banach space, $\A$ a Banach operator ideal and $(x_j)_j \subset X$ an $\A$-null sequence. Then $\lim_{m\rightarrow \infty} \m_\A(\co\{(x_j)_{j>m}\},X)=0$.
\end{lemma}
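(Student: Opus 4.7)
The plan is to exploit directly the factorization coming from the definition of an $\A$-null sequence. Since $(x_j)_j$ is $\A$-null, there exist a Banach space $Z$, an operator $T\in\A(Z;X)$, and a norm-null sequence $(z_j)_j\subset Z$ with $x_j=Tz_j$ for every $j$. The idea is to push the computation of the size $\m_\A$ back into $Z$, where everything is controlled by the convergence $z_j\to 0$.

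First I would observe that, because $T$ is linear, $\co\{(x_j)_{j>m}\}=T\bigl(\co\{(z_j)_{j>m}\}\bigr)$. Set $r_m=\sup_{j>m}\|z_j\|$; since $(z_j)$ is norm null, $r_m\to 0$. The absolutely convex hull $\co\{(z_j)_{j>m}\}$ sits inside $r_m B_Z$ because $B_Z$ is absolutely convex. Next, using that $\{z_j:j>m\}\cup\{0\}$ is a norm-compact subset of $Z$ together with Mazur's theorem (the closed absolutely convex hull of a norm-compact set in a Banach space is norm-compact), the set $M_m:=\overline{\co\{(z_j)_{j>m}\}}/r_m$ is a compact subset of $B_Z$.

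Now I would put these pieces together: the inclusion
$$
\co\{(x_j)_{j>m}\}\subset T\bigl(r_m M_m\bigr)=(r_m T)(M_m)
$$
exhibits a factorization of the required form, with operator $r_m T\in\A(Z;X)$ of $\A$-norm $r_m\|T\|_\A$ and compact set $M_m\subset B_Z$. By the definition of $\m_\A$ this yields
$$
\m_\A\bigl(\co\{(x_j)_{j>m}\},X\bigr)\le r_m\,\|T\|_\A,
$$
and letting $m\to\infty$ gives the lemma.

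Only two small points need care and I expect them to be the main (minor) obstacles. The first is the case $r_m=0$ for some $m$, which forces all tail elements to vanish and makes the conclusion trivial, so one can assume $r_m>0$ throughout. The second is justifying that $M_m$ is genuinely a compact subset of $B_Z$: this follows from Mazur's theorem applied to the compact set $\{r_m^{-1}z_j:j>m\}\cup\{0\}\subset B_Z$, whose closed absolutely convex hull equals $M_m$. Once these are in place, the argument is a direct unwinding of the definitions of $\A$-null sequence and of $\m_\A$.
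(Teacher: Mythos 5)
Your proof is correct and follows essentially the same route as the paper's: factor $x_j=Tz_j$ through $Z$ using the definition of an $\A$-null sequence, observe that the absolutely convex hull of the tail $(z_j)_{j>m}$ is relatively compact and contained in a ball of radius $r_m\to 0$, and rescale to read off $\m_\A(\co\{(x_j)_{j>m}\},X)\le r_m\|T\|_\A$. The only microscopic issue --- your $M_m$ lies in the closed unit ball rather than the open ball $B_Z$ appearing in the definition of $\m_\A$ --- disappears upon replacing $r_m$ by $(1+\delta)r_m$ and letting $\delta\to 0$, a point the paper's own proof glosses over as well.
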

\begin{proof}
Take $(x_j)_j \subset X$ an $\A$-null sequence. By \cite[Lemma 1.2]{CaSt} there exist a Banach space $Z$, an operator $T\in \A(Z;X)$ and a null sequence $(z_j)_j \subset Z$ such that $T(z_j)= x_j$ for all $j \in \Na$. Hence, given $\ep>0$ there exists $j_0 \in \Na$ such that $\|z_j\|\leq \frac{\ep}{\|T\|_\A}$.  Then we have
$$
\co\{(x_j)_{j>m}\}\subset \frac{\ep}{\|T\|_\A} T(\textstyle \frac{\|T\|_\A}{\ep} \co\{(z_j)_{z>m}\}),
$$ 
and since $\co\{(z_j)_{j>j_0}\}\subset \frac{\ep}{\|T\|_\A}B_Z$ is a relatively compact set,
$$
\m_\A(\co\{(x_j)_{j>m}\},X)\leq  \frac{\ep}{\|T\|_\A}\|T\|_{\A}=\ep.
$$
\end{proof}

\begin{lemma}\label{lemma_Sequence_m_A_infinite} Let $X$ be a Banach space, $\A$ and $\B$ Banach operator ideals and $K\subset X$ a relatively $\B$-compact set which is not relatively $\A$-compact. Then, there exist a $\B$-null sequence $(x_j)_j \subset X$ and a increasing sequence of integers $1=j_1<j_2<j_3 \ldots$ such that if
$$ 
L_m=\{x_{j_m},x_{j_m+1},\ldots,x_{j_{m+1}-1}\},
$$

\begin{enumerate}[\upshape (a)]
\item $\lim_{m\rightarrow \infty} \m_\B(L_m,X)=0$.
\item $\lim_{m\rightarrow \infty} \m_\A(L_m,X)=\infty$.
\end{enumerate}
In particular, the sequence $(x_j)_j$ is not $\A$-null.

\end{lemma}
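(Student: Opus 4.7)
The plan is to extract a $\B$-null sequence from $K$, transfer the non-$\A$-compactness to that sequence, and then construct the blocks inductively. First, I would use the $\B$-compactness of $K$ to fix $S \in \B(Z;X)$ and a compact $M \subset Z$ with $K \subset S(M)$. By the classical Grothendieck representation of compact sets, there is a null sequence $(z_j)_j \subset Z$ with $M \subset \overline{\co}\{z_j\}$; setting $x_j := Sz_j$ yields a $\B$-null sequence in $X$ with $K \subset \overline{\co}\{x_j\}$. Then I would observe that $\{x_j : j \geq 1\}$ cannot be relatively $\A$-compact: otherwise, writing $\{x_j\} \subset R(L)$ with $R \in \A(W;X)$ and $L \subset W$ compact, the fact that $\overline{\co}(L)$ remains compact in the Banach space $W$ would give $\overline{\co}\{x_j\} \subset R(\overline{\co}(L))$, hence $K$ relatively $\A$-compact, contradicting the hypothesis. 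Since adjoining finitely many points to an $\A$-compact set keeps it $\A$-compact, the same conclusion holds for every tail $\{x_j : j \geq n\}$.

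Next, I would build the indices inductively. Setting $j_1 = 1$, given $j_m$, I would pick $j_{m+1} > j_m$ so that $\m_\A(L_m, X) \geq m$; this yields (b). The existence of such $j_{m+1}$ rests on the sub-claim that $\sup_{N > n} \m_\A(\{x_j : n \leq j \leq N\}, X) = \infty$ for every $n$. I would prove this sub-claim by contrapositive: if the supremum were bounded by some $C$, I would fix witnesses $T_N \in \A(Z_N; X)$ with $\|T_N\|_\A \leq C + 1$ and compact $M_N \subset B_{Z_N}$ covering $\{x_n, \ldots, x_N\}$, and combine them via a weighted direct-sum construction in the spirit of Lemma~\ref{Lemma_Suma_A-compactos} to produce a single ideal operator whose image contains the whole tail $\{x_j : j \geq n\}$, contradicting the previous paragraph.

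Part (a) then follows automatically from Lemma~\ref{Lemma_A-null_to_0} applied to $\B$: since $L_m \subset \co\{(x_j)_{j \geq j_m}\}$, we have $\m_\B(L_m, X) \leq \m_\B(\co\{(x_j)_{j \geq j_m}\}, X) \to 0$. Part (b) holds by construction, and the ``in particular'' clause is immediate: if $(x_j)$ were $\A$-null, Lemma~\ref{Lemma_A-null_to_0} applied to $\A$ would force $\m_\A(L_m, X) \to 0$, contradicting (b). The main difficulty will be the operator-combination step of the sub-claim: the bounds $\|T_N\|_\A \leq C + 1$ are only uniform, not summable, so Lemma~\ref{Lemma_Suma_A-compactos} does not apply verbatim; the construction must exploit the norm-nullity of $(x_j)$---so that the required preimages can be chosen shrinking in $j$---to pick weights that simultaneously yield an operator of finite $\A$-norm and a compact preimage set contained in the unit ball.
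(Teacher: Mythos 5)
Your outer skeleton agrees with the paper's proof: extract a $\B$-null sequence whose absolutely convex hull contains $K$, get (a) for free from Lemma~\ref{Lemma_A-null_to_0}, note that no tail of $(x_j)_j$ can be relatively $\A$-compact, and reduce (b) to showing $\sup_{N>n}\m_\A(\{x_n,\ldots,x_N\},X)=\infty$ for every $n$. The gap is exactly where you flag it, and it cannot be patched in the way you suggest. The definition of $\m_\A$ hands you witnesses $\{x_n,\ldots,x_N\}\subset T_N(M_N)$ with $\|T_N\|_\A\leq C+1$ and $M_N\subset B_{Z_N}$ compact, but it gives no control whatsoever on the norms of preimages of individual elements: a vector of arbitrarily small norm may admit only preimages of norm close to $1$ in $M_N$. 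So the norm-nullity of $(x_j)_j$ cannot be converted into ``preimages shrinking in $j$'', and the weighted direct sum has no source of summable weights. Your sub-claim --- that a norm-null sequence whose finite subsets have uniformly bounded $\m_\A$ is itself relatively $\A$-compact --- is precisely the statement this method cannot reach, and the paper never proves it either; as stated, for a general Banach ideal $\A$ it remains unjustified.

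What the paper does instead is exploit a freedom your plan forgoes: the negation of (b) can be applied to \emph{any} $\B$-null sequence whose convex hull contains $K$, in particular to a blow-up $\tilde x_j=x_j/\gamma_j$, where $(\gamma_j)_j\in B_{c_0}$ is chosen so that $(\tilde x_j)_j$ is still $\B$-null; note $K\subset\co\{(x_j)_j\}\subset\co\{(\tilde x_j)_j\}$. Assuming all finite subsets of $(\tilde x_j)_j$ have $\m_\A\leq C$, one factors $x_j=T\circ R\circ R(e_j)$ through $\ell_1$, with $R e_j=\gamma_j^{1/2}e_j$ and $Te_j=\tilde x_j$, and shows that the truncations $S_j=T\circ\pi_j\circ R\circ R$ are Cauchy in $\|\cdot\|_{\K_\A}$: the increment factors as $\bigl(T\circ(\pi_k-\pi_j)\circ R\bigr)\circ\bigl(R\circ(\pi_k-\pi_j)\bigr)$, where the first factor has $\K_\A$-norm at most $C$ (by the finite-subset bound on the $\tilde x_i$'s, since $|\gamma_i|^{1/2}\leq 1$) and the second has operator norm $\sup_{i>j}|\gamma_i|^{1/2}\rightarrow 0$. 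Completeness of the Banach ideal $\K_\A$ then forces $T\circ R\circ R\in\K_\A(\ell_1;X)$, so $K\subset T\circ R\circ R(B_{\ell_1})$ would be relatively $\A$-compact, the desired contradiction. The essential device is the two-scale splitting $\gamma_j=\gamma_j^{1/2}\cdot\gamma_j^{1/2}$: the uniform $\m_\A$-bound is assumed for the blown-up sequence, while $\A$-compactness is concluded only for the scaled-down set containing $K$. To repair your argument you should run the block construction on $(\tilde x_j)_j$ rather than on $(x_j)_j$ and derive the contradiction for $K$ itself --- which is exactly the paper's proof.
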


\begin{proof}
Let $K\subset X$ be a relatively $\B$-compact set which is not relatively $\A$-compact and take a $\B$-null sequence $(x_j)_j$ such that $K\subset \co\{(x_j)_j)\}$. Let $(\gamma_j)_j \in B_{c_0}$ be a sequence such that, if $\tilde x_j=\frac{x_j}{\gamma_j}$ for all $j \in \Na$, then $(\tilde x_j)_j \in c_{0,\B}(X)$. Note that by the above lemma, for every increasing sequence of integers $1=j_1<j_2<j_3 \ldots$, the sequence
$$
L_m=\{x_{j_m},x_{j_m+1},\ldots,x_{j_{m+1}-1}\},
$$
satisfies $\lim_{m\rightarrow \infty} \m_\B(L_m,X)=0$. Then, the result follows once we show that there exists an increasing sequence $1=j_1<j_2<j_3 \ldots$ such that $\lim_{m\rightarrow \infty} \m_\A(L_m,X)=\infty$. Suppose that such sequence does not exist. Then, there is $C>0$ such that, for every choice $\tilde x_{j_1}, \tilde x_{j_2},\ldots, \tilde x_{j_j}$, 
$$
\m_\A(\{\tilde x_{j_1}, \tilde x_{j_2},\ldots, \tilde x_{j_j}\},X)\leq C.
$$
Consider the linear operators $R\colon \ell_1 \rightarrow \ell_1$ and $T\colon \ell_1 \rightarrow X$ which are defined on the elements of the canonical basis $(e_j)_j$ by
$$
R(e_j)=\gamma_j^{1/2} e_j \quad {\rm and} \quad T(e_j))\tilde x_j
$$
and extended by linearity and density. Since $T(B_{\ell_1})\subset \co\{(\tilde x_j)_j\}$, $T\in \K_\B(\ell_1;X)$. Also, $R\in \overF(\ell_1;\ell_1)$ and 
$$
K\subset T\circ R\circ R (B_{\ell_1}).
$$
Denote by $S_j\colon \ell_1\rightarrow X$ the linear operator defined as $S_j = T\circ \pi_j \circ R\circ R$, where $\pi_j\colon \ell_1\rightarrow \ell_1$ is the projection onto the first $j$ coordinates of $\ell_1$. Note that, for each $j \in \Na$, $S_j \in \F(\ell_1;X)$. Since
$$
\|S_{j_0}-T\circ R\circ R\|_{\K_\B}=\|T \circ (Id_{\ell_1}-\pi_{j_0})\circ R \circ R\|_{\K_\B}\leq \|T\|_{\K_\B} \sup_{j>j_0}|\gamma_j|,
$$
and $(\gamma_j)_j \in c_0$, the sequence $(S_j)_j$ converge to $T\circ R\circ R$. Hence, if we show that $(S_j)_j$ is a $\|\cdot\|_{\K_\A}$-Cauchy sequence, it would imply that $T\circ R\circ R \in \K_\A(\ell_1;X)$ and, as consequence, that $K$ is relatively $\A$-compact, which is a contradiction. Now, for $j<k$
$$
S_k-S_j=T\circ (\pi_k-\pi_j)\circ R \circ R=T\circ (\pi_k-\pi_j)\circ R \circ R \circ (\pi_k-\pi_j),
$$
therefore
\begin{equation}\label{eq1}
\|S_k-S_j\|_{\K_\A} \leq \|T\circ (\pi_k-\pi_j)\circ R\|_{\K_\A} \|R\circ (\pi_k-\pi_j)\|.
\end{equation}
Note that
$$
\begin{array}{rl}
\|T\circ (\pi_k-\pi_j)\circ R\|_{\K_\A}=&\m_\A(T\circ (\pi_k-\pi_j)\circ R(B_{\ell_1});X)\\
=&\m_\A(\gamma_j^{1/2} \tilde x_j, \gamma_{j+1}^{1/2} \tilde x_{j+1},\ldots, \gamma_{k-1}^{1/2}\tilde x_{k-1},X)\\
\leq&\m_\A(\tilde x_j, \tilde x_{j+1},\ldots, \tilde x_{k-1},X)\\
\leq & C.
\end{array}
$$
Then, from \eqref{eq1} it follows that
$$
\|S_k-S_j\|_{\K_\A}\leq C \|R\circ (\pi_k-\pi_j)\|=C \sup \{ |\gamma_j|^{1/2},|\gamma_{j+1}|^{1/2},\ldots |\gamma_{k-1}|^{1/2} \}.
$$
Since $(\gamma_j)_j \in B_{c_0}$, the last inequality shows that $(S_j)_j$ is a $\|\cdot\|_{\K_\A}$-Cauchy sequence, as we wanted to show.
\end{proof}

\begin{lemma}\label{Lemma_medida_finitos} Let $X$ be a Banach spaces, $\A$ a Banach operator ideal and $x_1,\ldots, x_m \in X$. Fix $n \in \Na$ and consider the set
$$
L=\{\sum_{j=1}^{m} \alpha_j^n x_j \colon (\alpha_j)_j \in B_{\ell_1}\}.
$$
Then $L$ is a relatively $\A$-compact set and $\m_\A(L,X)=\m_\A(\{x_1,\ldots,x_m\},X)$.
\end{lemma}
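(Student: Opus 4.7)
The plan is to bracket $L$ between $\{x_1,\dots,x_m\}$ and its absolutely convex hull and then invoke two facts that the paper has already recorded: monotonicity of $\m_\A$ under set inclusion, and the identity $\m_\A(K,X)=\m_\A(\co\{K\},X)$ for relatively $\A$-compact $K$.

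First I would establish the lower bound $\m_\A(\{x_1,\dots,x_m\},X)\le \m_\A(L,X)$. Taking, for each fixed $i\in\{1,\dots,m\}$, the sequence $(\alpha_j)_j$ with $\alpha_i=1$ and $\alpha_j=0$ otherwise yields $(\alpha_j)_j\in B_{\ell_1}$ and $\sum_j \alpha_j^n x_j = x_i$. Hence $\{x_1,\dots,x_m\}\subset L$, and monotonicity of $\m_\A$ (which follows immediately from the definition, since a witness $T(M)\supset L$ is automatically a witness for any subset of $L$) gives the desired inequality; in particular $L$ is non-empty and the relevant quantities are well-defined.

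For the upper bound, the key elementary observation is that for $(\alpha_j)_j\in B_{\ell_1}$ and $n\ge 1$ one has $|\alpha_j|\le 1$ and therefore $|\alpha_j^n|\le |\alpha_j|$, so $\sum_j|\alpha_j^n|\le\sum_j|\alpha_j|\le 1$. This means $(\alpha_j^n)_j\in B_{\ell_1}$ as well, and consequently every element of $L$ belongs to the absolutely convex hull of $\{x_1,\dots,x_m\}$. That is,
$$
L \subset \co\{x_1,\dots,x_m\}.
$$
Since any finite set is relatively compact, hence relatively $\A$-compact, the paper's identity $\m_\A(\co\{K\},X)=\m_\A(K,X)$ applies to $K=\{x_1,\dots,x_m\}$, and monotonicity of $\m_\A$ then yields $\m_\A(L,X)\le\m_\A(\co\{x_1,\dots,x_m\},X)=\m_\A(\{x_1,\dots,x_m\},X)$. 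Combining the two inequalities proves both that $L$ is relatively $\A$-compact and that the measures agree.

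There is really no obstacle here; the only thing one must be careful about is that the $n$-th power lives at the level of the scalar coefficient and not at the vector level, so the reduction to $B_{\ell_1}$ is legitimate for every $n\ge 1$. The argument does not require any finer structure of $\A$, only the two general properties of $\m_\A$ recalled in the preliminaries.
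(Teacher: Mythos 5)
Your proof is correct and follows essentially the same route as the paper: both arguments rest on the sandwich $\{x_1,\ldots,x_m\}\subset L\subset \co\{x_1,\ldots,x_m\}$ (using that $\|(\alpha_j^n)_j\|_{\ell_1}\leq\|(\alpha_j)_j\|_{\ell_1}$) together with the identity $\m_\A(K,X)=\m_\A(\co\{K\},X)$ and monotonicity of $\m_\A$. The only cosmetic difference is how relative $\A$-compactness of $L$ is first observed --- the paper notes that $L$ is a bounded subset of a finite-dimensional subspace, while you deduce it from the inclusion into the absolutely convex hull --- and both are perfectly adequate.
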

\begin{proof}
First note that since $L\subset E$ for some finite dimensional subspace $E$ of $X$ and $L$ is bounded, then it is relatively $\A$-compact. Now, if $(\alpha_j)_j \in \ell_1$, $(\alpha^m_j)_j \in \ell_1$ for any $m\geq 1$ and $\|(\alpha^m_j)_j\|_{\ell_1}\leq \|(\alpha_j)_j\|_{\ell_1}$. Thus 
$$
\{x_1,\ldots,x_m\}\subset L \subset \co\{x_1,\ldots,x_m\},
$$
and the result follow since $\m_\A(\{x_1,\ldots,x_m\},X)=\m_\A(\co\{x_1,\ldots,x_m\},X)$.
\end{proof}

Now we give an example of a holomorphic function which is $\A$-compact at the origin but this property does not extend beyond the ball with center at $0$ and radius $r_{\K_\A}(f,0)$. Since this function is $\A$-compact at $0$, in particular is compact at $0$ and, by \cite[Proposition 3.4]{AS}, the function is compact. The example shows that, with some hypothesis, the function can be taken to be $\B$-compact for some Banach operator ideal $\B$. The construction is based on \cite[Example~11]{DIN3} and \cite[Example~3.8]{LaTur1}. As usual $(e_j)_j$ is the canonical basis of $\ell_1$ and $(e'_j)_j$ is the sequence of coordinate functionals on $\ell_1$.

\begin{example}\label{exam_A-compact_at_0} Let $X$ be a Banach space, $\A$ and $\B$ Banach operator ideals such that there exists a relatively $\B$-compact set which not is $\A$-compact. Then, there exist a $\B$-compact holomorphic function $f \in \H_{\K_\B}(\ell_1;X)$ such that $f$ is $\A$-compact at $0$, but fails to be $\A$-compact at $e_1$.
\end{example}
\begin{proof}
By assumption, as a consequence of Lemma~\ref{lemma_Sequence_m_A_infinite}, there exist an $\B$-null sequence $(x_j)_j \subset X$ which is not $\A$-null and a increasing sequence $1=j_1<j_2,\ldots$ such that, with
$$
L_k=\{x_{j_k},x_{j_k+1},\ldots,x_{j_{k+1}-1}\},
$$
we have $\lim_{k\rightarrow \infty}(\m_\B(L_k,X)^{1/k}=0$ and $\lim_{k\rightarrow \infty} \m_\A(L_k,X)^{1/k}=\infty$.  For each $j \in \Na$ such that $j_k\leq j\leq j_{k+1}-1$ consider
$$
\tilde x_j=\frac{x_j}{\m_\A(L_k,X)} \quad {\rm and} \quad 
\tilde L_k=\{\tilde x_{j_k}, \tilde x_{j_k+1},\ldots, \tilde x_{j_{k+1}-1}\}.
$$
Routine arguments show that $\lim_{k\rightarrow \infty}(\m_\B(\tilde L_k,X)^{1/k}=0$ and $\lim_{k\rightarrow \infty} \m_\A(\tilde L_k,X)^{1/k}=1$. Fixed $n\geq 2$, define the $n$-homogeneous polynomial $P_n \in \p^n(\ell_1;X)$ by
$$
P_n(z)=e'_1(z)^{n-2} \sum_{j=j_n}^{j_{n+1}-1} e'_j(z)^2 \tilde x_j.
$$
Note that 
$$
P_n(B_{\ell_1})\subset \{\sum_{j=j_n}^{j_{n+1}-1} e'_j(z)^2 \tilde x_j\colon z \in B_{\ell_1}\},
$$
then $P_n \in \p^n_{\K_\B}(\ell_1,X)$ and, by Lemma~\ref{Lemma_medida_finitos}, $\|P_n\|_{\K_\B}\leq  \m_\B(\tilde L_n,X)$. Since $\|P_n\|\leq \|P_n\|_{\K_\B}$, then $\lim_{n\rightarrow \infty} \|P_n\|^{1/n}=0$ and following \cite[Example~5.4]{Mu}, we may define an entire function $f \in \H(\ell_1;X)$ as $f(z)=\sum_{n\geq 2} P_n(x)$. Moreover, since for every $n\geq 2$, $P_nf(0)=P_n$, $r_\B(f,0)=1/\m_\B(\tilde L_n,X)^{1/n}=\infty$. Then, by Corollary~\ref{Coro_A-comp1pt_A-comp}, $f \in \H_{\K_\B}(\ell_1;X)$. Also, note that $P_nf(0) \in \p^n_{\K_\A}(\ell_1;X)$ for all $n\geq 2$ and, again by Lemma~\ref{Lemma_medida_finitos} $\limsup \|P_nf(0)\|_{\K_\A}^{1/n}\leq \m_\A(\tilde L_n,X)^{1/n} =1$. Hence $r_{\K_\A}(f,0)\geq 1$ and $f$ is $\A$-compact at $0$. In order to see that $r_{\K_\A}(f,0)= 1$, fix $n\geq 2$ and $\ep>0$. Take $z_j \in B_{\ell_1}$ such that
$$
e'_1(z_j)=1-\ep, \quad e'_j(z_j)=\ep \quad {\rm and} \quad e'_k(z_j)=0 \quad {\rm for} j\neq k.
$$
Then, for each $j_n\leq j < j_{n+1}$ , $P_n(z_j)=(1-\ep)^{n-1} \ep^2 \tilde x_j$, which implies that
$$
(1-\ep)^{n-2}\ep^2\tilde L_n \subset P_n(B_{\ell_1}).
$$
Finally, note that 
$$
\m_\A((1-\ep)^{n-2}\ep^2\tilde L_n,X)= (1-\ep)^{n-2}\ep^2 \m_\A(\tilde L_n,X)\leq \m_\A(P_n(B_{\ell_1}),X)= \|P_n\|_{\K_\A}.
$$
Then $(1-\ep)\leq \limsup \|P_n\|_{\K_\A}^{1/n}$ and $r_{\K_\A}(f,0)\geq 1/(1-\ep)$ for all $\ep>0$, giving that $r_{\K_\A}(f,0)=1$.

In order to show that $f$ is not $\A$-compact at $e_1$, thanks to Proposition~\ref{Prop_A-compact_equiv} it enough to show that the $2$-homogeneous polynomial $P_2f(e_1)$ in not $\A$-compact. On the one hand, by \cite[Example~5.4]{Mu}, we have that
\begin{equation}\label{eq2}
P_2f(e_1)(z)=\sum_{n=2}^{\infty} \genfrac{(}{)}{0pt}{}{n}{2}\vP_n(e_1^{n-2},z^2).
\end{equation}
On the other hand, if we consider the $n$-linear operator $A_n \in \L^n(\ell_1;X)$ given by
$$
A_n(z_1,\ldots,z_n)=e'_1(z_1)\ldots e'_1(z_{n-2})\sum_{j=j_n}^{j_{n+1}-1} e'_j(z_{n-1})e'_j(z_n)\tilde x_j,
$$
then we have $P_n(z)=A_n(z,\ldots,z)$ and, denoting by $A^{\sigma}_n$ the $n$-linear operator given by
$$
A_n^{\sigma}(z_1,\ldots,z_n)=A_n(z_{\sigma(1)},\ldots,z_{\sigma(n)}), 
$$
where $\sigma \in \mathcal S_n$ is a permutation of $n$-elements, by~\cite[Proposition~1.6]{Mu} we have 
$$
\vP_n(e_1^{n-2},z^2)=\frac{1}{n!}\sum_{\sigma \in \mathcal S_n} A^{\sigma}_n (e_1^{n-2},z^2).
$$
Since $A_n(z_1,\ldots,z_{n-2},e_1,z_{n-1})=A_n(z_1,\ldots,z_{n-1},e_1)=0$ for all $z_1,\ldots,z_{n-1} \in \ell_1$ and $A_n(e_1^{n-2},z^2)=\sum_{j=j_n}^{j_{n+1}-1}e'_j(z)^2\tilde x_j$, we obtain that
\begin{equation}\label{eq3}
\vP_n(e_1^{n-2},z^2)=\frac{2 (n-2)!}{n!} \sum_{j=j_n}^{j_{n+1}-1} e'_j(z)^2\tilde x_j.
\end{equation}
Combining~\eqref{eq2} with \eqref{eq3}, we see that
$$
P_2f(e_1)(z)=\sum_{n\geq2}\sum_{j=j_n}^{j_{n+1}-1} e'_j(z)^2 \tilde x_j.
$$
Finally, since for every $j \in \Na$, $P_2f(e_1) e_j=\tilde x_j$, it follows that the sequence $(\tilde x_j)_j \subset P_2f(e_1)(B_{\ell_1})$ and, since $(\tilde x_j)_j $ is not $\A$-null, $P_2f(e_1)$ fails to be $\A$-compact, and the result is proved.
\end{proof}

The next proposition states that, for a Banach operator ideal $\A$, $\K_\A$ is a non-closed Banach operator ideal if and only if there exist a Banach space $X$ and a relatively compact set $K\subset X$ such that $K$ is not $\A$-compact. In particular, the above example, together with the next proposition, show that, we can not generalize (iv) implies (i) of Proposition~\ref{Prop_AS} in the case of non-closed $\A$-compact operators ideals. 

\begin{proposition}\label{Prop_Cerrados}
Let $\A$ be a Banach operator ideal. Then $\K_\A$ is a non-closed Banach operator ideal if and only if there exist a Banach space $X$ and a relatively compact set in $X$ which is not relatively $\A$-compact.
\end{proposition}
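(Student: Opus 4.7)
The strategy is to split the biconditional into its two implications, and the engine of both directions is the approximation of compact operators by finite-rank ones combined with the fact that in $\K_\A$ the norm $\|\cdot\|_{\K_\A}$ dominates the operator norm (so $\K_\A$ closed means closed in $(\L(X;Y),\|\cdot\|)$). I expect no serious obstacle; the only nontrivial ingredient is a classical lemma of Grothendieck that every relatively compact subset of a Banach space is contained in the closed absolutely convex hull of a norm-null sequence.

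For the implication ``a bad compact set exists $\Rightarrow$ $\K_\A$ is non-closed'', I would begin with a relatively compact $K\subset X$ that fails to be relatively $\A$-compact. Invoking the Grothendieck lemma, I pick a null sequence $(x_n)_n\subset X$ with $K\subset \overline{\co}\{(x_n)_n\}$, and define $T\colon \ell_1\to X$ on the canonical basis by $T(e_n)=x_n$, extended by linearity and continuity (which is legitimate because $(x_n)_n$ is bounded). Then $T(B_{\ell_1})=\overline{\co}\{(\pm x_n)_n\}\supset K$, so $T(B_{\ell_1})$ is not $\A$-compact and $T\notin \K_\A(\ell_1;X)$. On the other hand, the truncations $T_N=T\circ \pi_N$ (with $\pi_N$ the projection onto the first $N$ coordinates of $\ell_1$) have finite rank, hence lie in $\K_\A$, and $\|T-T_N\|\le \sup_{n>N}\|x_n\|\to 0$. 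Thus $\K_\A(\ell_1;X)$ is not closed in the operator norm.

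For the converse, I start from the assumption that $\K_\A$ is not closed: there are operators $T_n\in \K_\A(X;Y)$ and $T\in \L(X;Y)\setminus\K_\A(X;Y)$ with $\|T-T_n\|\to 0$. Each $T_n(B_X)$ is relatively $\A$-compact, in particular totally bounded. By the standard three-$\varepsilon$ argument, $T(B_X)$ lies within Hausdorff distance $\|T-T_n\|$ of the totally bounded set $T_n(B_X)$, so it is itself totally bounded, i.e.\ relatively compact in $Y$. But since $T\notin \K_\A$, the set $T(B_X)\subset Y$ is a relatively compact set that is not relatively $\A$-compact, which is precisely the required witness.

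The only step requiring any external input is the choice of a null sequence whose closed convex hull captures $K$, a classical result about compact sets in Banach spaces; everything else reduces to definition-chasing and the elementary fact that uniform limits of operators with totally bounded ranges have totally bounded ranges.
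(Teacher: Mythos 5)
Your proof is correct, and in the direction ``$\K_\A$ non-closed $\Rightarrow$ a witness set exists'' it coincides with the paper's: where the paper simply quotes that every $\A$-compact operator is compact and lets the (closed) ideal $\K$ absorb the limit, you reprove that fact by the three-$\ep$ total-boundedness argument; this is the same idea. The genuine difference is in the converse. The paper feeds the relatively compact, non-$\A$-compact set $K$ into its own Lemma~\ref{lemma_Sequence_m_A_infinite} (applied with $\B=\K$) to extract a norm-null sequence $(x_n)_n$ whose \emph{set of terms} is already not relatively $\A$-compact; the operator $T(e_n)=x_n$ then fails to be $\A$-compact for the immediate reason that $\{x_n \colon n \in \Na\}$ sits inside (a scalar multiple of) $T(B_{\ell_1})$. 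You instead invoke the classical Grothendieck lemma to get a null sequence with $K\subset \overline{\co\{(x_n)_n\}}$ and certify $T\notin\K_\A$ through this hull containment. Your route is more elementary and self-contained, since it bypasses Lemma~\ref{lemma_Sequence_m_A_infinite}, whose proof is itself substantial (the paper needs that lemma anyway for Examples~\ref{exam_A-compact_at_0} and~\ref{exam_fun_no_A_compacta}, so quoting it there costs nothing). The price is that you must pass through closures, and there you are slightly imprecise: $T(B_{\ell_1})$ is \emph{not} equal to $\overline{\co\{(\pm x_n)_n\}}$, as images of balls need not be closed; what is true is that $\co\{\pm x_n \colon n\in\Na\}\subset T(\overline{B}_{\ell_1})$, hence $\overline{T(B_{\ell_1})}=\overline{\co\{(\pm x_n)_n\}}\supset K$. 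To finish you then need the (easy, but worth stating) facts that relative $\A$-compactness is inherited by subsets and by closures: if $S\subset R(M)$ with $M$ compact and $R\in\A$, then $R(M)$ is compact, hence closed, so $\overline S\subset R(M)$ as well. With that one-line patch your argument is complete; the paper's choice of lemma sidesteps the closure issue entirely because the points $x_n$ themselves lie in the image of the (scaled) unit ball.
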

\begin{proof}
First, suppose that $\K_\A$ is non-closed. Then, there exist Banach spaces $X$ and $Y$, a sequence of $\A$-compact operators $(T_n)_n \in \K_\A(Y;X)$ and a operator $\in \L(Y;X)$ such that $(T_n)_n$ converge in the usual norm to $T$, but $T$ is not $\A$-compact. Since every $\A$-compact operator is, a compact operator, it follows that $T \in \K(Y;X)$. Hence, $K=T(B_Y)$ is a relatively compact set in $X$ which is not relatively $\A$-compact. For the converse, take $K\subset X$ a relatively compact set which is not $\A$-compact. By Lemma~\ref{lemma_Sequence_m_A_infinite}, there exist a sequence $(x_n)_n \in c_0(X)$ which is not $\A$-compact. Consider the operator $T\colon \ell_1 \rightarrow X$ given by $T(e_n)=x_n$ and extended by linearity and density. Is clear that $T$ is a compact operator which is not $\A$-compact. If $\pi_n\colon \ell_1 \rightarrow \ell_1$ is the projection to the first $n$-coordinates, then the sequence $(T\circ \pi_n)_n \subset \F(\ell_1;X)$ (hence it belongs to $\K_\A(\ell_1;X)$) and converges in the usual norm to $T$. This shows that $\K_\A$ is a non-closed operator ideal.
\end{proof}

Now, let us consider the following space. For Banach spaces $X$ and $Y$ and a Banach operator ideal $\A$, 
$$
\H^{\p}_{\K_\A}(X;Y)=\{f \in \H(X;Y) \colon P_nf(x) \in \p^n_{\K_\A}(X;Y) \ \forall \ x \in X, \ \forall \ n \in \Na\}.
$$
By Proposition~\ref{Prop_A-compact_equiv}, we have the inclusion $\H_{\K_\A} \subset \H^{\p}_{\K_\A}$ and, in the case of $\A=\K$, $\H_\K=\H^{\p}_{\K}$ \cite[Proposition~3.4]{AS}. However, under certain conditions, the identity $\H_{\K_\A}=\H^{\p}_{\K_\A}$ does not hold, as we show in the next example. The following construction is based on \cite[Example~3.7]{LaTur1} and \cite[Example~10]{DIN3}.

\begin{example}\label{exam_fun_no_A_compacta}
Let $X$ be a Banach space, $\A$ and $\B$ Banach operator ideals such that there exists a relatively $\B$-compact set which is not $\A$-compact. Then there exists an $\B$-compact holomorphic mapping $f\in \H_{\K_\B}(\ell_1;X)$ such that $f \in \H^{\p}_{\K_\A}(\ell_1;X)$, but $f$ is not $\A$ compact at any $z \in \ell_1$.
\end{example}
\begin{proof}
By Lemma~\ref{lemma_Sequence_m_A_infinite}, there exist a $\B$-null sequence $(x_j)_j \subset X$ and a increasing sequence $1=j_1<j_2<j_3\ldots$ such that, if $L_n=\{x_{j_n},x_{j_n+1},\ldots,x_{j_{n+1}-1}\}$, then
$$
\lim_{m\rightarrow \infty} \m_\B(L_m,X)^{1/m}=0 \quad {\rm and} \quad \lim_{m\rightarrow \infty} \m_\A(L_m,X)^{1/m}=\infty.
$$
For each $n\geq 1$, consider the $n$-homogeneous polynomial $P_n \in \p^n(\ell_1;X)$ defined by
$$
P_n(z)=\sum_{j=j_n}^{j_{n+1}-1}e'_j(z) x_j.
$$
Then
$$
\|P_n\|_{\K_\B}=\m_\B(\{\sum_{j=j_n}^{j_{n+1}-1}e'_j(z)^n x_j\colon z \in B_{\ell_1}\};X)=\m_\B(L_n;X), 
$$
where the last equality follows from Lemma~\ref{Lemma_medida_finitos}. Since $\|P_n\|\leq \|P_n\|_{\K_\B}$, we have
$$
\limsup\|P_n\|^{1/n} \leq \limsup\|P_n\|_{\K_\B}^{1/n}= \limsup \m_\B(L_n,X)^{1/n}=0.
$$
Now, combining \cite[Example~5.4]{Mu} and Corollary~\ref{Coro_A-comp1pt_A-comp}, we see that the holomorphic mapping $f=\sum_{n=1}^{\infty} P_n$ is well defined and is $\B$-compact. Also, note that for all $n\in \Na$, $P_n=P_nf(0)$ is $\A$-compact and, again by Lemma~\ref{Lemma_medida_finitos}, we have
$$
\limsup\|P_nf(0)\|_{\K_\A}^{1/n}= \limsup \m_\A(L_n,X)^{1/n}=\infty.
$$
Then, by Proposition~\ref{Prop_A-compact_equiv}, $f$ is not $\A$-compact at $0$.

Now, take any $z_0\in \ell_1$, $z_0\neq 0$ and fix $n_0 \in \Na$. Since the $n$-symmetric multilineal operator associated to $P_n$ is given by
$$
\vP_n(z_1,\ldots,z_n)=\sum_{j=j_n}^{j_{n+1}-1} e'_j(z_1)e'_j(z_2)\ldots e'_j(z_n) x_j,
$$
for all $z \in B_{\ell_1}$ we have

$$
\begin{array}{rl}
\d P_{n_0}f(z_0)(z)=&\d \sum_{j=n_0}^{\infty}\genfrac{(}{)}{0pt}{}{j}{n_0}\vP_j(z_0^{j-n_0},z^{n_0})\\
                =&\d \sum_{j=n_0}^{\infty}\genfrac{(}{)}{0pt}{}{j}{n_0} \sum_{j=j_n}^{j_{n+1}-1} e'_j(z_0)^{j-n_0}e'_j(z)^{n_0}x_j.               
\end{array}
$$
We claim that $P_{n_0}f(z_0)$ is $\A$-compact. In fact, 
\begin{equation}\label{eq4}
\begin{array}{rl}
\m_\A(P_{n_0}f(z_0)(B_{\ell_1},X)&\d \leq \sum_{j=n_0}^{\infty}\genfrac{(}{)}{0pt}{}{j}{n_0} \m_\A(\{\sum_{j=j_n}^{j_{n+1}-1} e'_j(z_0)^{j-n_0}e'_j(z)^{n_0}x_j\} \colon z \in B_{\ell_1}\},X)\\
&\d \leq \sum_{j=n_0}^{\infty}\genfrac{(}{)}{0pt}{}{j}{n_0} \sum_{j=j_n}^{j_{n+1}-1} |e'_j(z)|^{j-n_0}\|x_j\| \\
&\d \leq  C \sum_{j=n_0}^{\infty}\genfrac{(}{)}{0pt}{}{j}{n_0} \Big(\sum_{j=j_n}^{j_{n+1}-1} |e'_j(z_0)|\Big)^{j-n_0} 
\end{array}
\end{equation}
where $C=\sup_{j \in \Na} \|x_j\|$. Denote by $b_n=\sum_{j=j_n}^{j_{n+1}-1} |e'_j(z)|$. Since $z_0 \in \ell_1$, $(b_n)_n \in \ell_1$. This fact, shows that we can apply D'Alambert's criterion to the last series of inequality~\eqref{eq4} to show that it converge. Now, an application of Lemma~\ref{Lemma_Suma_A-compactos} shows that $P_{n_0}f(z_0)$ is $\A$-compact.

Finally, to see that $f$ is not $\A$-compact at $z_0$, just note that if we choose $j \in \Na$, $j_{n_0}\leq j<j_{n_0+1}$, then $P_{n_0}f(z_0)(e_j)=x_j$. Hence, $L_{n_0}\subset P_{n_0}f(z_0)(B_{\ell_1})$ and arguing as we did at the beginning of the example, we conclude that $f$ is not $\A$-compact at $z_0$.
\end{proof}

In particular, the above example, together with Proposition~\ref{Prop_Cerrados}, show that we can not generalize (ii) implies (i) and (iv) implies (iii) to the case of non-closed $\A$-compact operator ideals. Also, Example~\ref{exam_fun_no_A_compacta} shows that, in general, $\H_{\K_\A} \subsetneq \H^{\p}_{\K_\A}$. However, for $X$ and $Y$ Banach spaces, $\H_{\K_\A}(X;Y)$ is $\tau_{\omega}$-dense in $\H^{\p}_{\K_\A}(X;Y)$, where $\tau_{\omega}$ stands for the Nachbin topology. Following \cite[Proposition~3.47]{DIN3}, $\tau_{\omega}$ is a locally convex topology on $\H(X;Y)$ which is generated by the seminorms of the form 
$$
q(f)=\sum_{n=0}^{\infty} \sup_{x \in K+\alpha_n B_X} \|P_nf(0)(x)\|,
$$
where $K$ ranges over all absolutely convex compact subsets of $X$ and $(\alpha_n)_n$ over $c_0$.

\begin{proposition}\label{prop_Denso}
Let $X$ and $Y$ be Banach spaces and $\A$ a Banach operator ideal. Then, $\H_{\K_\A}(X;Y)$ is $\tau_{\omega}$-dense in  $\H^{\p}_{\K_\A}(X;Y)$.
\end{proposition}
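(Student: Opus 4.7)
My plan is to approximate every $f\in \H^{\p}_{\K_\A}(X;Y)$ by the partial sums of its Taylor series expansion at the origin. Concretely, for $N\geq 0$ set
$$
f_N=\sum_{n=0}^{N}P_nf(0).
$$
Each $P_nf(0)$ is, by hypothesis, an $\A$-compact $n$-homogeneous polynomial, so by Proposition~\ref{Prop_A-comp_poly} it is $\A$-compact at every point of $X$. A finite sum of functions which are $\A$-compact at a common point $x_0$ remains $\A$-compact there: for any $\ep>0$, $f_N(x_0+\ep B_X)\subset \sum_{n=0}^{N}P_nf(0)(x_0+\ep B_X)$ is a finite sum of relatively $\A$-compact sets and hence relatively $\A$-compact by the subadditivity of $\m_\A$ recalled in Section~1. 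Therefore $f_N\in \H_{\K_\A}(X;Y)$.

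Next I would show that $f_N\to f$ in $\tau_{\omega}$. Fix a Nachbin seminorm
$$
q(g)=\sum_{n=0}^{\infty}\sup_{x\in K+\alpha_n B_X}\|P_ng(0)(x)\|
$$
as in the description preceding the statement. Since $P_n(f-f_N)(0)=P_nf(0)$ for $n>N$ and vanishes for $n\leq N$, we have
$$
q(f-f_N)=\sum_{n=N+1}^{\infty}\sup_{x\in K+\alpha_n B_X}\|P_nf(0)(x)\|,
$$
which is the tail of the series defining $q(f)$. This series converges: since $f$ is entire, $\limsup\|P_nf(0)\|^{1/n}=0$, and choosing $M>0$ with $K\subset M B_X$ gives the estimate $\sup_{x\in K+\alpha_n B_X}\|P_nf(0)(x)\|\leq \|P_nf(0)\|\,(M+\sup_{n}|\alpha_n|)^n$, whose $n$-th root still tends to $0$, so the series converges (geometrically). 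Consequently $q(f-f_N)\to 0$ as $N\to\infty$. Since this holds for every generating seminorm of $\tau_{\omega}$, the density follows.

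The only delicate step is the verification that the Nachbin seminorms are finite on the entire space $\H(X;Y)$; this is a classical property (cf.\ \cite[Proposition~3.47]{DIN3}) which I would cite rather than reprove. Once this is admitted, the argument reduces to a straightforward truncation of the Taylor series, combined with the two structural facts already available in the paper: that $\A$-compact homogeneous polynomials are $\A$-compact at every point (Proposition~\ref{Prop_A-comp_poly}), and that the size $\m_\A$ is subadditive under finite sums of $\A$-compact sets.
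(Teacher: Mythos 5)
Your strategy coincides with the paper's: truncate the Taylor expansion at the origin, note that each partial sum $f_N=\sum_{n=0}^{N}P_nf(0)$ is a finite sum of $\A$-compact polynomials and hence belongs to $\H_{\K_\A}(X;Y)$, and observe that $q(f-f_N)$ is the tail of the series defining $q(f)$. The paper's proof is exactly this, with $\tilde f_0=\sum_{n=0}^{n_0-1}P_nf(0)$ playing the role of your $f_N$.

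There is, however, one genuinely false step in your justification that the series $q(f)=\sum_{n}\sup_{x\in K+\alpha_n B_X}\|P_nf(0)(x)\|$ converges: the claim that ``since $f$ is entire, $\limsup\|P_nf(0)\|^{1/n}=0$''. On an infinite-dimensional Banach space an entire function need not have infinite radius of uniform convergence, and the paper itself exhibits the phenomenon: in the discussion preceding Example~\ref{exam_A-compact_at_0}, the Josefson--Nissenzweig theorem yields $f(x)=\sum_{n\geq 1}x'_n(x)^n \in \H(X)$ with $\|x'_n\|=1$ for all $n$, so that $\|P_nf(0)\|=\|x'_n\|^n=1$ and $\limsup\|P_nf(0)\|^{1/n}=1$. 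For such an $f$ your crude estimate $\sup_{x\in K+\alpha_n B_X}\|P_nf(0)(x)\|\leq\|P_nf(0)\|\,(M+\sup_n|\alpha_n|)^n$ gives no convergence whatsoever; indeed no bound that ignores the compactness of $K$ and the condition $(\alpha_n)_n\in c_0$ can work, since those are exactly the features that make the Nachbin seminorms finite. The repair is the fallback you yourself mention: the finiteness of $q$ on all of $\H(X;Y)$ is a classical property of the seminorms generating $\tau_{\omega}$ (it is what makes them well defined), and it is precisely what the paper uses implicitly when it chooses $n_0$ with $\sum_{n\geq n_0}\sup_{x\in K+\alpha_n B_X}\|P_nf(0)(x)\|\leq\ep$. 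If you delete the erroneous $\limsup$ argument and simply invoke that classical fact, the tail of a convergent series tends to zero, $q(f-f_N)\rightarrow 0$, and your proof becomes correct and identical to the paper's.
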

\begin{proof}

Let $f_0 \in \H^{\p}_{\K_\A}(X;Y)$ and consider the $\tau_{\omega}$-continuous seminorm $q$ determined by $K$ and $(\alpha_n)_n$
$$
q(f)=\sum_{n=0}^{\infty} \sup_{x \in K+\alpha_n B_X} \|P_nf(0)(x)\|.
$$
Given $\ep>0$, take $n_0 \in \Na$ such that $\sum_{n=n_0}^{\infty} \sup_{x \in K+\alpha_n B_X} \|P_nf(0)(x)\|\leq \ep$ and consider $\tilde f_0(x)=\sum_{n=0}^{n_0-1} P_nf(0)(x)$. Note that $\tilde f_0 \in \H_{\K_\A}(X;Y)$ since it is a finite sum of $\A$-compact polynomials. Then, the result follows from the inequality
$$
q(f_0 - \tilde f_0)=\sum_{n=n_0}^{\infty} \sup_{x \in K+\alpha_n B_X} \|P_nf(0)(x)\|\leq \ep.
$$
\end{proof}
\subsection*{Acknowledgements} I am indebted to my advisor Silvia Lassalle for her valuable and unselfish comments and suggestions, which substantially improved this article . I would also like to thank Richard Aron and Pilar Rueda for providing me with their article \cite{AR3}.

\end{document}